\documentclass[12pt]{article}
\usepackage{fullpage,amsthm,amssymb,amsmath}
\usepackage{enumerate,color}
\usepackage{tikz}

\newtheorem{theorem}{Theorem}[section]
\newtheorem{lemma}[theorem]{Lemma}

\newtheorem{cor}[theorem]{Corollary}

\newtheorem{example}[theorem]{Example}

\newcommand\needed[1]{
%\textcolor{red}{\textbf{{#1}}}
}

\begin{document}

\title{Word-representability of Toeplitz graphs}

\author{
Gi-Sang Cheon
\thanks{Applied Algebra and Optimization Research Center, Department of Mathematics, Sungkyunkwan University, Suwon, Republic of Korea.
\texttt{gscheon@skku.edu}
}
\and
Minki Kim
\thanks{Department of Mathematical Sciences, KAIST, Daejeon, Republic of Korea.
\texttt{kmk90@kaist.ac.kr}
}
\and
Jinha Kim
\thanks{Department of Mathematical Sciences, Seoul National University, Seoul, Republic of Korea.
\texttt{kjh1210@snu.ac.kr}
}
\and
Sergey Kitaev
\thanks{Department of Computer and Information Sciences, University of Strathclyde, Glasgow, United Kingdom.
\texttt{sergey.kitaev@cis.strath.ac.uk}
}
}

\date\today

\maketitle

\begin{abstract}
Distinct letters $x$ and $y$ alternate in a word $w$ if after deleting in $w$ all letters but the copies of $x$ and $y$ we either obtain a word of the form $xyxy\cdots$ (of even or odd length) or a word of the form $yxyx\cdots$ (of even or odd length). A graph $G=(V,E)$ is word-representable if there exists a word $w$ over the alphabet $V$ such that letters $x$ and $y$ alternate in $w$ if and only if $xy$ is an edge in $E$. 

In this paper we initiate the study of word-representable Toeplitz graphs, which are Riordan graphs of the Appell type. We prove that several general classes of Toeplitz graphs are word-representable, and we also provide a way to construct non-word-representable Toeplitz graphs.  Our work not only merges the theories of Riordan matrices and word-representable graphs via the notion of a Riordan graph, but also it provides the first systematic study of word-representability of graphs defined via patterns in adjacency matrices. Moreover, our paper introduces the notion of an infinite word-representable Riordan graph and gives several general examples of such graphs. It is the first time in the literature when the word-representability of infinite graphs is discussed. \\

\noindent
{\bf Keywords:} Toeplitz graph; word-representable graph; Riordan graph; pattern 

\noindent 
{\bf AMS classification:} 05C62, 15B05, 68R15
\end{abstract}

\maketitle

\section{Introduction}
In this paper, we merge the theories of Riordan matrices and word-representable graphs via the notion of a Riordan graph introduced recently in~\cite{CJKM}. More precisely, we focus on the studies of Riordan graphs of the Appell type, which are known in the literature as Toeplitz graphs. We give various (general) conditions on (non-)word-representability of Toeplitz graphs leaving a complete classification in this research direction as a grand open question.

In this paper, for a word (or letter) $x$, the word $\underbrace{xx\cdots x}_{\tiny k\mbox{ times}}$ is 
denoted by $x^k$. We also let $[n]$ denote the set $\{1,2,\ldots,n\}$.
% Also, for any word $w$, $w^{\infty}$ denotes the infinite word $www\cdots$. 

\subsection{Riordan matrices}
For any integral domain $\kappa$, we consider the ring of formal power series
$$\kappa[[z]]=\left\{f=\sum_{n=0}^\infty f_nz^n\ |\  f_n\in
\kappa\right\}.$$
If there exists a pair of generating functions $(g,f)\in \kappa[[z]]\times \kappa[[z]]$ with $f(0)=0$ such that $g\cdot f^j=\sum_{i\ge0}\ell_{i,j}z^i$ for each integer $j\ge0$, then the matrix  $L=[\ell_{ij}]_{i,j\ge0}$ is called a  {\it Riordan
matrix}, or {\em Riordan array}, generated by $g$ and $f$. Usually, we write
$L=(g,f)$. Since $f(0)=0$, every Riordan matrix $(g,f)$ is an infinite
lower triangular matrix. In particular, if a Riordan matrix is
invertible then it is a {\it proper} Riordan matrix. Note that $(g,f)$ is
invertible if and only if $g(0)\ne0$, $f(0)=0$ and
$f^\prime(0)\ne0$.
Some well known Riordan matrices are as follows.

$\bullet$ The {\em Pascal triangle} 
$$P=\left({1\over 1-z},{z\over
1-z}\right)=\left(\begin{array}{cccccc}
1&0&0&0&0&\cdots\\
1&1&0&0&0&\cdots\\
1&2&1&0&0&\cdots\\
1&3&3&1&0&\cdots\\
1&4&6&4&1&\cdots\\
\vdots&\vdots&\vdots&\vdots&\vdots&\ddots
\end{array} \right).$$

$\bullet$ The {\em Catalan triangle} 
$$C=\left({{1-\sqrt{1-4z}}\over
2z},{{1-\sqrt{1-4z}}\over 2}\right)=\left(\begin{array}{cccccc}
1&0&0&0&0&\cdots\\
1&1&0&0&0&\cdots\\
2&2&1&0&0&\cdots\\
5&5&3&1&0&\cdots\\
14&14&9&4&1&\cdots\\
\vdots&\vdots&\vdots&\vdots&\vdots&\ddots
\end{array} \right).$$

 $\bullet$ The {\em Fibonacci matrix} 
$$F=\left({1\over 1-z-z^2},z\right)=\left(\begin{array}{cccccc}
1&0&0&0&0&\cdots\\
1&1&0&0&0&\cdots\\
2&1&1&0&0&\cdots\\
3&2&1&1&0&\cdots\\
5&3&2&1&1&\cdots\\
\vdots&\vdots&\vdots&\vdots&\vdots&\ddots
\end{array} \right).$$

For a Riordan matrix $(g,f)$ over $\mathbb{Z}$, the $(0,1)$-matrix
$L=[\ell_{ij}]_{i,j\ge0}$ defined by
\begin{eqnarray*}
 \ell_{ij}\equiv[z^i]gf^j\;({\rm mod\;2})
\end{eqnarray*}
is called a {\it binary Riordan matrix}, and it is denoted by ${\cal
B}(g,f)$. The {\em leading principal matrix of order $n$} in ${\cal
B}(g,f)$ (resp., $(g,f)$) is denoted by ${\cal B}(g, f)_n$ (resp.,
$(g,f)_n$).

\subsection{Riordan graphs}
The notion of a Riordan graph was introduced in~\cite{CJKM}. A simple {\em labelled} graph $G$ with $n$ vertices is a {\em Riordan
graph} of order $n$ if the adjacency matrix of $G$ is an $n\times n$
symmetric $(0,1)$-matrix given by
\begin{eqnarray*}
{\cal A}(G)={\cal B}(zg,f)_n+{\cal B}(zg, f)_n^T
 \end{eqnarray*}
 for some Riordan matrix $(g,f)$ over $\mathbb{Z}$. We denote such $G$
  by $G_n(g,f)$, or simply by $G_n$ when the matrix $(g,f)$ is understood from the context, or it is not important.
A simple {\em unlabelled} graph is a {\em Riordan graph} if at least
one of its labelled copies is a Riordan graph. However, only labelled graphs are of interest to us in this paper.

So, for a Riordan graph $G$ on $n$ vertices defined by $(g,f)$, the $n\times n$ adjacency matrix ${\cal A}(G)$ satisfies the following:
 \begin{itemize}
\item its main diagonal entries are all 0, and
\item its lower triangular part below the main diagonal is the $(n-1)\times (n-1)$ binary Riordan matrix ${\cal B}(g,f)_{n-1}$.
\end{itemize}

Note that the {\em infinite} graph 
\begin{equation}\label{inf-def} G:=G(g,f)=\lim_{n\to\infty}G_n(g,f)\end{equation} 
is well defined, and we call it the {\em infinite Riordan graph} corresponding to the pair $(g,f)$. 

For example, the Riordan graph $G_6\left(\frac{1}{1-z},z\right)$ in Figure~\ref{complete} is the complete graph $K_6$, while $G\left(\frac{1}{1-z},z\right)$ is the infinite complete graph. 
Another example of a Riordan graph is the {\em Catalan graph of order} $n$, which is defined as $G_n\left({1-\sqrt{1-4z}\over2z},{1-\sqrt{1-4z}\over 2}\right)$; see Figure~\ref{catalan} for the case of $n=6$.
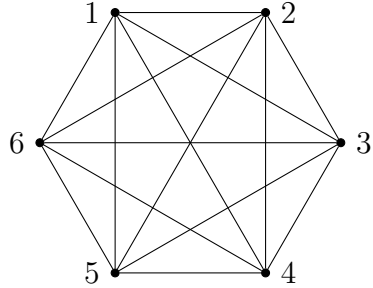
\begin{figure}[htbp]
\centering
\begin{tikzpicture}[main node/.style={fill,circle,draw,inner sep=0pt,minimum size=3pt}, scale=1]

\begin{scope}[xshift=-3cm, yshift=0cm]
\node (v) at (0,0){
 ${\cal A}\left(G_6\left(\frac{1}{1-z},z\right)\right)=\left(\begin{array}{cccccc}
        0 & 1 & 1 & 1 & 1 & 1 \\
        1 & 0 & 1 & 1 & 1 & 1 \\
        1 & 1 & 0 & 1 & 1 & 1 \\
        1 & 1 & 1 & 0 & 1 & 1 \\
        1 & 1 & 1 & 1 & 0 & 1 \\
        1 & 1 & 1 & 1 & 1 & 0
      \end{array}\right)$};
      \end{scope}

\begin{scope}[xshift=5cm, yshift=0cm]
\node[main node, label=left:1] (v1) at (120:2){};
\node[main node, label=right:2] (v2) at (60:2){};
\node[main node, label=right:3] (v3) at (0:2){};
\node[main node, label=right:4] (v4) at (300:2){};
\node[main node, label=left:5] (v5) at (240:2){};
\node[main node, label=left:6] (v6) at (180:2){};

\draw (v1)--(v2)--(v3)--(v4)--(v5)--(v6)--(v1);
\draw (v1)--(v3)--(v5)--(v1); \draw (v2)--(v4)--(v6)--(v2);
\draw (v1)--(v4); \draw (v2)--(v5); \draw (v3)--(v6);
\end{scope}
\end{tikzpicture}
\caption{$G_6\left(\frac{1}{1-z},z\right)$ is the complete graph $K_6$}
\label{complete}
\end{figure}  

\begin{figure}[htbp]
\centering
\begin{tikzpicture}[main node/.style={fill,circle,draw,inner sep=0pt,minimum size=3pt}, scale=1]

\begin{scope}[xshift=-3cm, yshift=0cm]
\node (v) at (0,0){
 ${\cal A}\left(G_6\left({1-\sqrt{1-4z}\over2z},{1-\sqrt{1-4z}\over 2}\right)\right)=\left(\begin{array}{cccccc}
        0 & 1 & 1 & 0 & 1 & 0 \\
        1 & 0 & 1 & 0 & 1 & 0 \\
        1 & 1 & 0 & 1 & 1 & 1 \\
        0 & 0 & 1 & 0 & 1 & 0 \\
        1 & 1 & 1 & 1 & 0 & 1 \\
        0 & 0 & 1 & 0 & 1 & 0
      \end{array}\right)$};
      \end{scope}

\begin{scope}[xshift=5cm, yshift=-1.5cm]
\node[main node, label=left:$1$] (v1) at (-1.5,1){};
\node[main node, label=$2$] (v2) at (-2,3){};
\node[main node, label=below:$3$] (v3) at (0,0){};
\node[main node, label=$4$] (v4) at (0,3){};
\node[main node, label=right:$5$] (v5) at (1.5,1){};
\node[main node, label=$6$] (v6) at (2,3){};

\draw (v1)--(v2)--(v3)--(v4)--(v5)--(v6); \draw (v1)--(v3)--(v5)--(v1);
\draw (v2)--(v5); \draw (v3)--(v6);
\end{scope}
\end{tikzpicture}
\caption{$G_6\left({1-\sqrt{1-4z}\over2z},{1-\sqrt{1-4z}\over 2}\right)$, the Catalan graph of order $6$}
\label{catalan}
\end{figure}

\subsection{Toeplitz graphs}
A Riordan graph $G_n(g,f)$ with $f=z$ is called a {\em Riordan graph of the Appell type}. For example, the {\em Fibonacci graph} $G_n\left(\frac{1}{1-z-z^2},z\right)$ is of such a type. The class of Riordan graphs of the Appell type is also known as the class of {\em Toeplitz graphs}. Originally, a Toeplitz graph $G = (V, E)$ is defined as a graph with $V =[n]$ and $$E=\{ij\ |\ |i-j|\in\{t_1,\ldots,t_k\},1\leq t_1<\cdots< t_k\leq n-1\}.$$ See~\cite{Ghorban} and references therein for examples of results in the literature on Toeplitz graphs.

Throughout this paper, we denote by $G_n(a_1 a_2 \cdots a_m)$ the Toeplitz graph on $n$ vertices which is defined by $$G_n\left(\frac{b_1 + b_2 z + \cdots + b_m z^{m-1}}{1-z^m}, z\right),$$
where 
%$a_i \in \{0,1\}$ and 
$a_i \equiv b_i$ $(\text{mod}~2)$.
For instance, the Fibonacci graph $G_n\left(\frac{1}{1-z-z^2},z\right)$ can be written as $G_n(110)$, or $G_n(1^2 0)$ since $\frac{1}{1-z-z^2}\equiv\frac{1+z}{1-z^3}\  (\text{mod}~2)$.

\subsection{Word-representable graphs}
Suppose that $w$ is a word over some alphabet and $x$ and $y$ are two distinct letters in $w$. We say that $x$ and $y$ {\em alternate} in $w$ if after deleting in $w$ all letters but the copies of $x$ and $y$ we either obtain a word of the form $xyxy\cdots$ (of even or odd length) or a word of the form $yxyx\cdots$ (of even or odd length).

A graph $G=(V,E)$ is {\em word-representable} if there exists a word $w$ over the alphabet $V$ such that letters $x$ and $y$ alternate in $w$ if and only if $xy$ is an edge in $E$. Such a word $w$ is called $G$'s {\em word-representant}. In this paper we assume $V$ to be $[n]=\{1,2,\ldots,n\}$ for some $n$. For example, the cycle graph on 4 vertices labeled by 1, 2, 3 and 4 in clockwise direction can be represented by the word 14213243. Note that a complete graph $K_n$ can be represented by any  permutation of $[n]$, while an edgeless graph (i.e.\ empty graph) on $n$ vertices can be represented by $1122\cdots nn$.

There is a long line of research on word-representable graphs, which is summarised in the recently published book \cite{KL} and the survey paper \cite{K}. The roots of the theory of word-representable graphs are in the study of the celebrated Perkins semigroup \cite{KS,Seif} which has played a central role in semigroup theory since 1960, particularly as a source of examples and counterexamples. However, the most interesting aspect of word-representable graphs from an algebraic point of view seems to be the notion of a {\em semi-transitive orientation} \cite{HKP}, which generalizes partial orders. It was shown in  \cite{HKP} that a graph is word-representable if and only if it admits a semi-transitive orientation (see Section~\ref{sec4} for a definition of a semi-transitive orientation).

More motivation points to study word-representable graphs include the fact exposed in \cite{KL} that these graphs generalize several important classes of graphs such as {\em circle graphs} \cite{Cer}, 3-{\em colourable graphs} \cite{BE} and {\em comparability graphs} \cite{Lov}. Relevance of word-representable graphs to scheduling problems was explained in \cite{HKP} and it was based on \cite{GZ}. Furthermore, the study of word-representable graphs is interesting from an algorithmic point of view as explained in \cite{KL}. For example, the {\em Maximum Clique problem} is polynomially solvable on word-representable graphs \cite{KL} while this problem is generally NP-complete \cite{BBPP}. Finally, word-representable graphs are an important class among other graph classes considered in the literature that are defined using words. Examples of other such classes of graphs are {\em polygon-circle graphs} \cite{Koebe} and {\em word-digraphs} \cite{Bell}. 

In relation to the main focus in our paper, one can prove the following theorem. 

\begin{theorem}\label{two-numbers} Toeplitz graphs $G_n(z^s,z)$ and  $G_n(z^s+z^t,z)$, $s< t$, are word-representable for any $n\geq 1$. \end{theorem}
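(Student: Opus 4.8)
The plan is to reduce both statements to $3$-colourability, and then apply the fact recalled in the introduction that every $3$-colourable graph is word-representable. So the first step is to identify the two graphs concretely. Since $f=z$ here, both are Toeplitz graphs, and computing ${\cal B}(z^{s+1},z)$ and ${\cal B}(z^{s+1}+z^{t+1},z)$ shows that $G_n(z^s,z)$ is the graph on $[n]$ with $ij\in E$ iff $|i-j|=s+1$, while $G_n(z^s+z^t,z)$ is the graph on $[n]$ with $ij\in E$ iff $|i-j|\in\{s+1,t+1\}$. Write $a=s+1$ and, in the second case, $b=t+1$, so $1\le a<b$.

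For $G_n(z^s,z)$ the claim is immediate: partition $[n]$ by residue class modulo $a$; each class spans a path (consecutive elements differ by $a$) and there are no edges between distinct classes, so the graph is a disjoint union of paths, in particular bipartite, hence $3$-colourable. For $G_n(z^s+z^t,z)$ I would look for a $3$-colouring of the form $i\mapsto \psi(i \bmod p)$ with period $p=a+b$; such periodic colourings of $\ZZ$ are exactly the proper colourings of the circulant $C_p\langle a,b\rangle$ on $\ZZ/p\ZZ$, and $G_n(z^s+z^t,z)$ is an induced subgraph of the integer distance graph on $\ZZ$ with distance set $\{a,b\}$, so such a colouring restricts to a proper colouring of it. The key observation is that $b\equiv -a \pmod{a+b}$, so modulo $a+b$ the ``distance $b$'' adjacencies coincide with the ``distance $a$'' ones; hence $C_p\langle a,b\rangle=C_p\langle a\rangle$, which is a disjoint union of $\gcd(a,b)$ cycles, each of length $(a+b)/\gcd(a,b)\ge 3$. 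Every cycle is $3$-colourable, so $C_p\langle a\rangle$ is $3$-colourable, and pulling this colouring back gives a $3$-colouring of $G_n(z^s+z^t,z)$. Applying ``$3$-colourable $\Rightarrow$ word-representable'' in both cases finishes the proof.

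I do not expect a genuine obstacle here. The only thing one really has to see is the period-$(a+b)$ colouring trick — equivalently, that reducing modulo $a+b$ collapses the two forbidden distances into a single one, turning the relevant circulant into a disjoint union of cycles — after which everything is elementary. If one prefers to avoid citing the $3$-colourability theorem, the same colour classes yield a semi-transitive orientation directly (orient every edge from the smaller colour class to the larger, so that every directed path has length at most $2$ and hence no shortcut can occur), or one could instead write down an explicit periodic word-representant; I would still present the argument via $3$-colourability as the shortest route.
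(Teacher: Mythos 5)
Your proof is correct, and at the top level it follows the same strategy as the paper: identify the graphs as distance graphs with distance sets $\{s+1\}$ and $\{s+1,t+1\}$, show they are $3$-colourable, and invoke the theorem of Halld\'orsson--Kitaev--Pyatkin that $3$-colourable graphs are word-representable. Where you diverge is in how $3$-colourability is established. The paper's argument is a one-line induction: vertex $1$ of $G_n(z^s+z^t,z)$ has at most two neighbours (namely $1+(s+1)$ and $1+(t+1)$), so the graph is $2$-degenerate and a greedy colouring with three colours succeeds. Your argument instead constructs an explicit proper colouring that is periodic with period $a+b$ (where $a=s+1$, $b=t+1$), using the observation that $b\equiv -a\pmod{a+b}$ collapses the circulant $C_{a+b}\langle a,b\rangle$ to $C_{a+b}\langle a\rangle$, a disjoint union of $\gcd(a,b)$ cycles of length $(a+b)/\gcd(a,b)\ge 3$; this is correct (the length bound holds precisely because $a<b$) but is more machinery than needed. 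What your route buys is an explicit, uniform colouring of the infinite distance graph on $\ZZ$ --- which is in the spirit of the paper's later explicit constructions and would be needed if one wanted, say, a concrete periodic word-representant --- whereas the paper's degeneracy argument is shorter and generalizes immediately to any $G_n(z^{s_1}+z^{s_2},z)$-type graph where each new vertex sees at most two earlier ones. Your closing remark that the colour classes yield a semi-transitive orientation with no directed path on four vertices is also the standard proof of the cited $3$-colourability theorem, so that alternative is sound as well.
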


\begin{proof} It is easy to see by induction on the number of vertices that  $G_n(z^s+z^t,z)$, and thus $G_n(z^s,z)$, is 3-colorable. Indeed, the base case is trivial, and assuming $G_{n-1}(z^s+z^t,z)$ is  3-colorable, we can obtain  $G_n(z^s+z^t,z)$ by increasing the labels of its vertices by 1, and adding the new vertex labeled by 1 to it. Since the new vertex is connected to (no more than) two vertices, there is at least one colour of three colours
%(out of three colours) 
available for it. Since 3-colorable graphs are word-representable \cite{HKP}, we are done. \end{proof}

\subsection{Infinite word-representable graphs}

For an {\em infinite} graph $G=G(g,f)$ defined by the relation (\ref{inf-def}) we say that $G$ is word-representable if {\em each} finite graph $G_n(g,f)$ is word-representable.  We note that the notion of an {\em infinite word-representable graph} was never considered in the literature.

Define the {\em index of word-representability} IWR($G$) of an infinite graph $G=G(g,f)$ as the {\em largest} $n$ such that $G_n(g,f)$ is word-representable. Since any graph on at most five vertices is word-representable \cite{KL}, we have that IWR($G$)$\geq 5$ for any $G$. If $G$ is word-representable, we let IWR($G$)$=\infty$.

%%%%%%%%%%%%%% Theorem 2.2 is replaced with Corollary 2.2
As corollaries to Theorems~\ref{two-numbers}, \ref{010} and \ref{thm:101} and Corollary~\ref{01,10} and \ref{001}, we obtain many examples of infinite word-representable graphs. In particular, it follows from Corollary~\ref{01,10} that the Fibonacci matrix defines a Toeplitz graph with the index of word-representability $\infty$. On the other hand, it can be checked using \cite{Glen} that the Pascal triangle and the Catalan triangle define Riordan graphs with the index of word-representability 11 and 12, respectively. The smallest non-word-representable Pascal and Catalan graphs are given, respectively, by the following adjacency matrices
$$\begin{array}{cc}
\left(\begin{array}{cccccccccccc}
0&1&1&1&1&1&1&1&1&1&1&1\\
1&0&1&0&1&0&1&0&1&0&1&0\\
1&1&0&1&1&0&0&1&1&0&0&1\\
1&0&1&0&1&0&0&0&1&0&0&0\\
1&1&1&1&0&1&1&1&1&0&0&0\\
1&0&0&0&1&0&1&0&1&0&0&0\\
1&1&0&0&1&1&0&1&1&0&0&0\\
1&0&1&0&1&0&1&0&1&0&0&0\\
1&1&1&1&1&1&1&1&0&1&1&1\\
1&0&0&0&0&0&0&0&1&0&1&0\\
1&1&0&0&0&0&0&0&1&1&0&1\\
1&0&1&0&0&0&0&0&1&0&1&0
\end{array} \right)
&
\left(\begin{array}{ccccccccccccc}
0&1&1&0&1&0&0&0&1&0&0&0&0\\
1&0&1&0&1&0&0&0&1&0&0&0&0\\
1&1&0&1&1&1&0&0&1&1&0&0&0\\
0&0&1&0&1&0&0&0&1&0&0&0&0\\
1&1&1&1&0&1&1&0&1&1&1&0&0\\
0&0&1&0&1&0&1&0&1&0&1&0&0\\
0&0&0&0&1&1&0&1&1&1&0&1&0\\
0&0&0&0&0&0&1&0&1&0&0&0&0\\
1&1&1&1&1&1&1&1&0&1&1&0&1\\
0&0&1&0&1&0&1&0&1&0&1&0&1\\
0&0&0&0&1&1&0&0&1&1&0&1&1\\
0&0&0&0&0&0&1&0&0&0&1&0&1\\
0&0&0&0&0&0&0&0&1&1&1&1&0
\end{array} \right)
\end{array}.$$
Also, from Section~\ref{sec3} we see that for $G(a_1 a_2 \cdots a_m)=\lim_{n\to\infty}G_n(a_1 a_2 \cdots a_m)$, $$\mbox{IWR}(G(10^21^5))=\mbox{IWR}(G(10^21^4))=\mbox{IWR}(G(101^4))=\mbox{IWR}(G(101^3))=8,$$ which are the smallest non-word-representable Riordan graphs. 

\subsection{Comparability graphs}
An orientation of a graph is {\em transitive}, if the presence of the edges $u\rightarrow v$ and $v\rightarrow z$ implies the presence of the edge $u\rightarrow z$. An oriented graph $G$ is a {\em comparability graph} if $G$ admits a transitive orientation. 
A graph is {\em permutationally representable} if it can be represented by concatenation of permutations of (all) vertices. Thus, permutationally representable graphs are a subclass of word-representable graphs. The following theorem classifies these graphs.

\begin{theorem}[\cite{KS}]\label{comp-thm} A graph is permutationally representable if and only if it is a comparability graph. \end{theorem}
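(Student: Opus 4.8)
The plan is to prove both implications by translating the combinatorial notion of alternation inside a concatenation of permutations into the order-theoretic notion of comparability. The elementary fact driving everything is this: if $w=\pi_1\pi_2\cdots\pi_k$ is a concatenation of permutations of $V$ and $x,y$ are distinct vertices, then the restriction of $w$ to $\{x,y\}$ is a word of length $2k$ whose $(2i-1)$-th and $(2i)$-th letters are $x$ and $y$ listed in the order in which they appear in $\pi_i$; a short case check shows this restriction is alternating if and only if $x$ and $y$ occur in the \emph{same} relative order in every $\pi_i$ (two equal consecutive letters appear precisely when some pair $\pi_i,\pi_{i+1}$ disagrees on the order of $x,y$).

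For the direction ``comparability graph $\Rightarrow$ permutationally representable'', I would fix a transitive orientation of $G$ and read it as a strict partial order $P$ on $V$: irreflexivity and antisymmetry are immediate for an orientation, and transitivity of $P$ is exactly the transitivity of the orientation, so that $x$ and $y$ are comparable in $P$ precisely when $xy\in E$. Invoking the classical fact that every finite poset is the intersection of finitely many of its linear extensions $L_1,\dots,L_k$ (for each incomparable pair pick an extension ordering it one way and another ordering it the other way), form $w=L_1L_2\cdots L_k$, where each $L_i$ is read as the permutation listing $V$ from least to greatest. By the observation above, $x$ and $y$ alternate in $w$ iff they have the same order in all $L_i$, iff they are comparable in $P$, iff $xy\in E$; hence $w$ is a permutational word-representant of $G$.

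For the converse, suppose $w=\pi_1\cdots\pi_k$ represents $G$ with each $\pi_i$ a permutation of $V$. The observation shows that whenever $xy\in E$ the vertices $x,y$ have a common relative order across all $\pi_i$, so I can \emph{define} an orientation of $G$ by $x\to y$ iff $xy\in E$ and $x$ precedes $y$ in $\pi_1$ (equivalently, in every $\pi_i$); this orients each edge in exactly one direction. To see it is transitive, suppose $x\to y$ and $y\to z$ are present: then $x$ precedes $y$ precedes $z$ in every $\pi_i$, so the restriction of $w$ to $\{x,z\}$ is $xzxz\cdots xz$, which alternates, forcing $xz\in E$; and since $x$ precedes $z$ we get $x\to z$. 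Thus $G$ admits a transitive orientation and is a comparability graph.

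The only ingredient beyond bookkeeping is the realizer theorem for finite posets (Dushnik--Miller), which itself has a one-line proof in the finite case; I expect the main care to go into stating the ``alternation $=$ consistent relative order'' lemma precisely, and, in the converse, into making sure the constructed orientation is asserted only on genuine edges of $G$ — which the argument guarantees, since $x\to z$ is declared only after $xz\in E$ has been verified via alternation.
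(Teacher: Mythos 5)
The paper states this theorem only with a citation to [KS] and gives no proof of its own; your argument is correct and is essentially the standard one from that reference --- the ``alternation $=$ consistent relative order across the permutations'' lemma, linear-extension realizers of the transitive orientation in one direction, and reading off a transitive orientation from the common order in the other.
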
 

%It was shown in \cite{KP} that if a graph $G$ is word-representable then it is {\em $k$-word-representable} (or {\em $k$-representable} for brevity) for some $k$, that is, $G$ can be represented by a $k$-{\em uniform} word $w$, i.e. a word containing $k$ copies of each letter. In such a context we say that $w$ {\em $k$-represents} $G$. For example, the cycle graph on 4 vertices mentioned above can be 2-represented by the word 14213243. Thus, when discussing word-representability, one need only consider  $k$-uniform words.  

%A graph $G$ is a {\em circle graph} if the vertices of $G$ can be represented by chords on a circle, so that two chords intersect if and only if the respective vertices are adjacent in $G$. The following theorem will be useful in this paper. 
%In particular, any tree is $2$-representable \cite{KP,KL}, and thus any tree is a circle graph by Theorem~\ref{circle-thm}. 
%
%\begin{theorem}[\cite{HKP}]\label{circle-thm} A graph $G$ is $2$-representable if and only if $G$ is a circle graph. \end{theorem} 

Note that $G$ is a comparability graph if its adjacency matrix $A=(a_{i,j})_{1\leq i,j\leq n}$ satisfies the following: any time when $a_{i,j}=1$ and $a_{j,k}=1$ for $i<j<k$ we also have $a_{i,k}=1$.
To see this, one can obtain a transitive orientation of $G$ by orienting each edge $ij$ as $i\rightarrow j$ whenever $i < j$.
In particular, we have the following statement by the transitivity of the congruence relation.
Observe that in the Toeplitz graph $G_n(0^{k-1} 1)$ a vertex $i$ is adjacent to a vertex $j$ if and only if $i \equiv j$ $(\text{mod}~k)$.
%@@@@ changed according to REVIER 3, COMMENT 2
%The following simple theorem gives a sufficient condition for an adjacency matrix to define a comparability graph.
%As a consequence, we obtain a subclass of word-representable Toeplitz graphs (see Corollary~\ref{001} below).
%% A simple application gives a class of word-representable Toeplitz graph.
%\begin{theorem}\label{adj-compar}
%Let $A=(a_{i,j})_{1\leq i,j\leq n}$ be the adjacency matrix for a graph $G$. 
%Then a sufficient condition for $G$ to be a comparability graph is as follows: any time when $a_{i,j}=1$ and $a_{j,k}=1$ for $i<j<k$ we also have $a_{i,k}=1$. \end{theorem}
%\begin{proof}
%By orienting each edge $ij$ as $i\rightarrow j$ whenever $i < j$, we obtain a transitive orientation of $G$.
%\end{proof}
\begin{cor}\label{001}
For any positive integers $n$ and $k$, the Toeplitz graph $G_n(0^{k-1} 1)$ is permutationally representable. %word-representable.
\end{cor}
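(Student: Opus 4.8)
The plan is to invoke Theorem~\ref{comp-thm}, which reduces the claim to showing that $G_n(0^{k-1}1)$ is a comparability graph, and then to verify this via the sufficient condition recorded just above the corollary: a graph with adjacency matrix $A=(a_{i,j})$ is a comparability graph whenever $a_{i,j}=1$ and $a_{j,\ell}=1$ with $i<j<\ell$ force $a_{i,\ell}=1$ (the orientation $i\to j$ for $i<j$ is then transitive).

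First I would pin down the adjacency relation explicitly. Taking $g=\frac{z^{k-1}}{1-z^{k}}$, one has $zg=\frac{z^{k}}{1-z^{k}}=\sum_{t\ge 1}z^{tk}$, so the $(i,j)$-entry of ${\cal B}(zg,z)_n$ below the diagonal equals $[z^{i-j}]zg$, which is $1$ precisely when $k\mid (i-j)$. Symmetrizing, this confirms the observation stated before the corollary: in $G_n(0^{k-1}1)$ the vertices $i$ and $j$ are adjacent if and only if $i\equiv j\pmod{k}$ and $i\neq j$. Equivalently, $G_n(0^{k-1}1)$ is the disjoint union of cliques indexed by the residue classes of $[n]$ modulo $k$.

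The verification of the matrix condition is then immediate from transitivity of congruence: if $a_{i,j}=1$ and $a_{j,\ell}=1$ with $i<j<\ell$, then $i\equiv j\pmod{k}$ and $j\equiv\ell\pmod{k}$, hence $i\equiv\ell\pmod{k}$; since $i<\ell$ they are distinct, so $a_{i,\ell}=1$. Thus $G_n(0^{k-1}1)$ is a comparability graph, and Theorem~\ref{comp-thm} yields that it is permutationally representable.

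I do not anticipate a genuine obstacle here; the only points requiring care are the bookkeeping in translating the defining series $\frac{b_1+\cdots+b_k z^{k-1}}{1-z^{k}}$ into the ``$i\equiv j\pmod{k}$'' description of the edges, and being consistent about whether vertices are indexed from $0$ or from $1$ (a shift of all residues by a constant, which does not affect the argument). Alternatively, one can bypass the matrix criterion altogether and observe directly that a disjoint union of complete graphs is a comparability graph — orient every edge inside each clique according to the vertex order — which makes the conclusion transparent and also covers the degenerate case $k=1$, where $G_n(0^{k-1}1)=K_n$.
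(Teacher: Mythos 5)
Your proof is correct and follows the same route the paper takes: the paper justifies this corollary in the paragraph immediately preceding it, by observing that adjacency in $G_n(0^{k-1}1)$ is the relation $i\equiv j\pmod{k}$, deducing the comparability-matrix condition from transitivity of congruence, and then applying Theorem~\ref{comp-thm}. Your extra verification of the adjacency relation from the generating function and your alternative ``disjoint union of cliques'' remark are both fine but do not change the argument in substance.
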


\subsection{Our results in this paper}

We already stated some results on word-representability of Toeplitz graphs in Theorem~\ref{two-numbers} and Corollary~\ref{001}. For another result, we note that the Toeplitz graph defined by  $\left(\frac{1}{1-z},z\right)$ is word-representable, because $G_n\left(\frac{1}{1-z},z\right)$ is a complete graph on $n$ vertices and it can be represented by any permutation of length $n$.  For word-representation of Riordan graphs beyond Toeplitz graphs, we note that the graph defined by $\left(1+z+\cdots +z^{k-1},z^{k}\right)$, $k\geq 1$, is also word-representable because each of such graphs $G_n$ is clearly a tree, and any tree can be represented using two copies of each letter~\cite{KL}. (Word-representability of a tree also follows from Theorem~\ref{comp-thm} since any tree is a comparability graph.) The observation on $\left(1+z+\cdots +z^{k-1},z^{k}\right)$ can be generalized to Riordan graphs defined by $(P(z),z^{m})$ where $P(z)$ is any polynomial of degree $k-1$ and $m\geq k$. Indeed, any such Riordan graph is a forest, so that it is a comparability graph and is word-representable by Theorem~\ref{comp-thm}. 

In either case, the main results in this paper are establishing word-representability of 
\begin{itemize}
%\item  for every positive integers $k, m, n$.
\item $G_n(0^k 1^{m-k-l}0^l)$ for any positive integers $k,l,m,n$ such that $k+l < m$ in Theorem~\ref{010}; and
\item $G_n(1^k 0^m)$ and $G_n(0^k 1^m)$ for any positive integers $k, m$ and $n$ in Corollary~\ref{01,10}, which generalizes Corollary~\ref{001}; and
\item $G_n(1^{k-1}01^{m-k})$ for any positive integers $k < m$ satisfying either $\gcd(k,m)=1$ or $k=\frac{m}{2}$ in Theorem~\ref{thm:101}.
\end{itemize}
%%%%%%%% Here, we changed the order 1107

We will also show in Theorem~\ref{thm:decomp} that word-representability of a graph $G_n(a_1a_2\cdots a_m)$ implies that for each positive divisor $d$ of $m$, $G_{\lfloor \frac{n}{d}\rfloor}(a_d a_{2d} \cdots a_m)$ is also word-representable. The latter gives a way to construct non-word-representable Toeplitz graphs.

%Computer experiments show that the smallest non-word-representable Toeplitz graph has $9$ vertices (see Example~\ref{two-zeros}). % index of word-representability for this class of graphs is 9 (see Proposition~\ref{two-zeros}). {\bf [TODO: Double check this statement by looking at all such graph $8\times 8$. I've conducted such a check, but I may miss some case.]}

\medskip

\subsection{Proofs in this paper}\label{proofs-in-paper} 

All of our general statements contain rigorous proofs, e.g. in terms of explicit words representing various graphs.
%, or in terms of semi-transitive orientations. 
However, in many other situations we have to refer to the results obtained using the freely-available user-friendly software \cite{Glen} created by Marc Glen to keep the paper being of reasonable size. Each of such results can be verified by hand as follows. If we claim that a (small) graph is word-representable, then \cite{Glen} can produce a word-representant for the graph, which can be used as a certificate. On the other hand, if we claim that a (small) graph is non-word-representable (based on the results of \cite{Glen}), then this can be checked using the notion of a semi-transitive orientation (see \cite{CKL} or \cite[Section 4.5]{KL} for a detailed explanation, illustrated on a particular graph, of how to do such a check; also, see Section~\ref{sec4}). 

\section{Word-representable Toeplitz graphs}\label{sec2}
In this section, we investigate for which $a_1,\dots,a_m \in \{0,1\}$ the graph $G_n(a_1a_2\cdots a_m)$ is word-representable for every $n$. We support our claims by explicit constructions of word-representants.
%the following forms. Indeed the words we construct are uniform.
%\begin{itemize}
%\item $G_n(1^k 0^m)$ for every positive integers $k, m, n$.
%\item $G_n(0^k 1^m)$ for every positive integers $k, m, n$.
%\item $G_n(1^{k-1}01^{m-k})$ for every integers $k < m$ satisfying either $\gcd(k,m)=1$ or $k=\frac{m}{2}$.
%\item $G_n(0^k 1^{m-k-l}0^l)$ for every positive integers $k,l,m,n$ such that $k+l < m$.
%\end{itemize}
Given a subset $S$ of $[n]$, we let $u(S)$ (resp., $d(S)$) denote the words obtained by arranging the elements of $S$ in the increasing (resp., decreasing) order.

\subsection{Word-representability of $G_n(0^k 1^{l}0^m)$}
In this section, we prove that for every positive integer $n$, the graph $G_n(0^k 1^{l}0^m)$ is word-representable for each non-negative integers $k,l$ and $m$ such that $k + l + m \ge 1$.
A direct consequence implies that for every positive integers $k,l,m$ and $n$ the graph of the form either $G_n(0^k 1^l)$ or $G_n(1^l 0^m)$ is word-representable.
% We first prove that the graphs $G_n(0^k 1^m)$ and $G_n(1^k 0^m)$ are always word-representable.
Note that the special case of $l=1$ and $m=0$ follows from Corollary~\ref{001}.

\begin{theorem}\label{010}
For any non-negative integers $k,l$ and $m$ such that $k + l + m \geq 1$, the graph $G_n(0^{k}1^{l} 0^{m})$ is word-representable for any positive integer $n$.
\end{theorem}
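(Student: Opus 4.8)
The goal is to exhibit, for every $n$, an explicit word-representant of $G_n(0^{k}1^{l}0^{m})$. Let me first unpack the structure of this graph. Writing $a_1a_2\cdots a_M$ with $M=k+l+m$, the polynomial numerator is $(b_{k+1}z^{k}+\cdots+b_{k+l}z^{k+l-1})/(1-z^{M})$ with $b_i\equiv a_i\pmod 2$, so vertices $i$ and $j$ (with $i<j$) are adjacent precisely when $j-i\pmod M$ lies in $\{k+1,\dots,k+l\}$ --- that is, when the cyclic ``distance residue'' of $i$ and $j$ modulo $M$ falls in a window of $l$ consecutive residues that avoids the residues $0,1,\dots,k$ at the bottom and $k+l,\dots,M-1$ (equivalently $M-m,\dots,M-1$, i.e.\ $-m,\dots,-1$) at the top. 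Grouping the vertices $[n]$ into residue classes modulo $M$, say $C_0,C_1,\dots,C_{M-1}$ (listing each class in increasing order), the edges only go between classes $C_r$ and $C_s$ with $|r-s|$ or $M-|r-s|$ in the allowed window, and crucially there are \emph{no edges within a class} (since $0$ is a forbidden residue). This is the key simplification: each $C_t$ induces an independent set.

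The construction I would try is to build the word as a concatenation of blocks, one block roughly per residue class, where the block for $C_t$ is a permutation (increasing or decreasing order) of the vertices in $C_t$, chosen so that: (i) two vertices in the same class never alternate (easy --- if a letter appears exactly once in each of two blocks and the blocks containing them are ``far apart'' they alternate, so I actually need each $C_t$'s vertices to appear \emph{twice}, or to arrange non-adjacent classes to interleave harmlessly), and (ii) two vertices $x\in C_r$, $y\in C_s$ alternate iff their cyclic distance is in the window. A cleaner approach, given the earlier pattern in the paper: represent the graph on the ``quotient'' cyclically --- the relation ``$r\sim s$ iff the residue difference is in the window'' is itself a circulant graph $H$ on $M$ vertices, and one expects $G_n(0^k1^l0^m)$ to be a kind of ``blow-up'' of $H$ where each vertex is replaced by an independent set and adjacency is inherited. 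Blow-ups of word-representable graphs by independent sets are word-representable when the word-representant uses two occurrences of each letter (replace letter $v$ by $v_1v_2\cdots v_q v_q \cdots v_2 v_1$ or a similar palindromic gadget in both of its occurrences); so the plan reduces to (a) showing $H$ is word-representable with a $2$-uniform representant, and (b) checking the blow-up substitution preserves exactly the right alternations, including the subtlety that $n$ need not be a multiple of $M$ so the independent sets have sizes differing by one.

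For step (a): $H$ is the circulant graph $C_M(k+1,k+2,\dots,k+l)$ --- a ``consecutive-distances'' circulant. Such graphs should be handled by the already-proved Theorem~\ref{two-numbers} style arguments or directly: in fact $H$ on $M$ vertices, being a circulant with a connected set of generators forming an interval, is likely $3$-colorable (or even a comparability graph / proper interval–like structure), hence word-representable by \cite{HKP}; but I need a representant with \emph{two} copies of each letter to do the blow-up, which is a stronger requirement. So the real work is to write down such a $2$-uniform word for $C_M(k+1,\dots,k+l)$ by hand --- I would guess something built from two interleaved ``rotations'' $\sigma\sigma'$ where $\sigma$ lists $0,1,\dots,M-1$ and $\sigma'$ lists them shifted, tuned so that $r,s$ alternate iff $|r-s|\bmod M$ is in the interval. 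Verifying this combinatorial identity is the main obstacle: one must carefully track, for each pair $(r,s)$, the four positions and confirm the alternation pattern matches membership in the window, and then separately confirm that the blow-up (with possibly unequal block sizes because $M\nmid n$) does not create or destroy any alternations between vertices of distinct classes. I expect the bookkeeping in this last verification --- especially the edge cases where one class is larger --- to be the delicate part, though conceptually routine; the paper most likely carries it out by giving the word $u(C_0)u(C_1)\cdots u(C_{M-1})\,d(C_{M-1})\cdots d(C_1)d(C_0)$ or a rotation-shifted variant and checking alternation pair by pair.
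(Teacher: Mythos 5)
Your reduction rests on the claim that $G_n(0^{k}1^{l}0^{m})$ is a blow-up by independent sets of the circulant graph $H=C_M(k+1,\dots,k+l)$ on the $M:=k+l+m$ residue classes, ``where adjacency is inherited.'' That claim is false, and it is exactly the point that makes this family delicate. For $x\in B_i$ and $y\in B_j$ with $x<y$, adjacency is governed by $(y-x)\bmod M$, which equals $j-i$ when $i<j$ but $j-i+M$ when $i>j$; so whether two vertices from a fixed pair of classes are adjacent depends not only on the classes but on which vertex is numerically larger. The bipartite graph between two classes is therefore in general neither complete nor empty but a ``staircase.'' Concretely, for $k=2$, $l=3$, $m=1$ (so $M=6$ and the allowed residues are $\{3,4,5\}$), vertex $5\in B_5$ is adjacent to $1\in B_1$ (difference $4$) but not to $7\in B_1$ (difference $2$). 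Hence no blow-up lemma can apply, and step (b) of your plan cannot be carried out. A secondary error: when $m=0$ the residue $0$ is \emph{not} forbidden (the last letter $a_M$ equals $1$), so each residue class induces a clique rather than an independent set; your ``no edges within a class'' observation needs $m\geq 1$.

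Because of this, the paper's proof does not factor through any quotient graph on $M$ vertices. It works directly on the $n$ vertices: starting from the $1$-uniform word $d(B_1)\cdots d(B_M)$, which represents $K_n$, it appends for each $t$ permutation-like words $w_1(t)$ and $w_2(t)$ whose middle block is a single decreasing (or increasing) arrangement of a \emph{union} of $k+1$ (respectively $m$) cyclically consecutive classes. Sorting such a union by actual vertex value is precisely what distinguishes $x<y$ from $x>y$ within a pair of classes, so these blocks break alternation exactly for the non-adjacent pairs (residue differences in $\{1,\dots,k\}$ and $\{k+l+1,\dots,M\}$) while preserving it for adjacent ones; the verification is a case analysis on the residue difference. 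Your step (a) (a hand-built $2$-uniform representant of the interval circulant) is moreover only conjectured, not constructed; but even granting it, the structural misidentification above is fatal to the approach as stated.
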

\begin{proof}
If $l = 0$, then the graph $G_n(0^k 1^l 0^m) = G_n(0^{k+m})$ is empty, which is word-representable.
Thus we may assume that $l \neq 0$.
If $k=m=0$, then the graph $G_n(0^k 1^l 0^m) = G_n(1^l)$ is a complete graph, which is word-representable.
Hence we may further assume that at least one of the integers $k$ and $m$ is positive.

To construct the word that represents the graph $G_n(0^k 1^l 0^m)$, we partition the set $[n]$ into $B_1\cup\cdots\cup B_{k+l+m}$, where for each $i\in[k+l+m]$,
$$B_i:=\{a \in [n]\ |\ a \equiv i \pmod{k+l+m}\}.$$
We then define $w_1(t)$ by the $1$-uniform word (permutation)
$$w_1(t):=d(B_1)\dots d(B_{t-1})d(B_t \cup \cdots \cup B_{t+k})d(B_{t+k+1})\dots d(B_{k+l+m})$$
for $1 \le t \le l+m$, and by the $2$-uniform word
$$w_1(t):=d(B_1)\dots d(B_{t-1})d(B_t \cup \cdots \cup B_{k+l+m} \cup B_1 \cup \cdots \cup B_{t-l-m})d(B_{t-l-m+1})\dots d(B_{k+l+m})$$
for $l+m+1 \le t \le k+l+m$.
Similarly, we define $w_2(t)$ by the $2$-uniform word
$$w_2(t):=d(B_1)\dots d(B_{k+l+t})u(B_{k+l+t+1}\cup \cdots \cup B_{k+l+m} \cup B_1 \cup \cdots \cup B_t)d(B_{t+1})\dots d(B_{k+l+m})$$
for $1 \le t \le m-1$, and by the $1$-uniform word
$$w_2(t):=d(B_1)\dots d(B_{t-m})u(B_{t-m+1}\cup \cdots \cup B_t)d(B_{t+1})\dots d(B_{k+l+m})$$
for $m \le t \le k+l+m$.
For $m = 0$, we define $w_2(t)$ to be the empty word for any $t$.
Note that the empty word does not affect alternation of any pair of vertices, so we do not have to consider the case of $m=0$ separately.
Now we claim that the graph $G_n(0^{k}1^{l} 0^{m})$ can be represented by the word 
$$W:=d(B_1)d(B_2)\dots d(B_{k+l+m})w_1(1)\dots w_1(k+l+m) w_2(1) \dots w_2(k+l+m).$$
We refer the Reader to Example~\ref{ex1} right after the proof illustrating the construction of $W$ in the case of $k=2$, $l=3$, $m=1$ and $n=11$.

Before we go through the details, we will briefly describe our proof idea.
We start with the $1$-uniform word $d(B_1)d(B_2)\dots d(B_{k+l+m})$ which represents the complete graph of order $n$.
Let $x \in B_i$ and $y \in B_j$ be two vertices in the graph $G_n(0^k 1^l 0^m)$.
Without loss of generality, we may assume that $x < y$.
Note that the word $d(B_1)d(B_2)\dots d(B_{k+l+m})$ contains either $xy$ or $yx$ as an induced subword.

We will first show that if the vertices $x$ and $y$ are adjacent in the graph $G_n(0^k 1^l 0^m)$, then $x$ and $y$ alternate in each of the words $w_1(t)$ and $w_2(t)$, in the same order as in the word $d(B_1)d(B_2)\dots d(B_{k+l+m})$.
For instance, if $x$ and $y$ are adjacent in the graph $G_n(0^k 1^l 0^m)$ and $d(B_1)d(B_2)\dots d(B_{k+l+m})$ contains $xy$ as an induced subword, then each of $w_1(t)$ and $w_2(t)$ contains either $xy$ or $xyxy$ as an induced subword.

Then we will show that if the vertices $x$ and $y$ are not adjacent in the graph $G_n(0^k 1^l 0^m)$, then $x$ and $y$ do not alternate in the word $W$.
Here, the words $w_1(i)$ and $w_2(i)$ play an important role.
For instance, depending on the condition of $i$ and $j$, the order of $x$ and $y$ in the subword $d(B_i \cup \dots \cup B_{i+k})$ of $w_1(i)$ may be different from that  in the word $d(B_i)\dots d(B_{i+k})$, breaking alternation of $x$ and $y$ in the word $W$.

\bigskip
%\begin{enumerate}[{\bf \text{Case }1.}]
%\item 
\noindent{\bf Case 1.} Let $j-i \equiv a \pmod {k+l+m}$ for some $a \in \{k+1,k+2,\dots,k+l\}$.
In this case, $x$ and $y$ are adjacent in $G_n(0^k1^l0^m)$, so $x$ and $y$ must alternate in $W$. We consider two subcases.

\smallskip
\noindent (1) First we assume that $i < j$.
Then $j-i = a$ and the word $d(B_1)\dots d(B_{k+l+m})$ contains $xy$ as an induced subword.
Thus it is sufficient to show that for each $t$, both $w_1(t)$ and $w_2(t)$ contain either $xy$ or $xyxy$ as an induced subword.

For $w_1(t)$, we claim that $w_1(t)$ contains $xy$ as an induced subword if $1 \le t \le l+m$ and contains $xyxy$ as an induced subword if $l+m+1 \le t \le k+l+m$.
For $1 \le t \le l+m$, $x$ and $y$ cannot appear in the word $d(B_t \cup \cdots \cup B_{t+k})$ at the same time because $a \geq k+1$.
Thus it is clear that the $1$-uniform word
$$w_1(t)=d(B_1)\dots d(B_{t-1})d(B_t \cup \cdots \cup B_{t+k})d(B_{t+k+1})\dots d(B_{k+l+m})$$
contains $xy$ as an induced subword.
For $l+m+1 \le t \le k+l+m$, if $x$ and $y$ do not appear in the word $d(B_t \cup \cdots \cup B_{k+l+m} \cup B_1 \cup \cdots \cup B_{t-l-m})$ at the same time, then it is obvious that the $2$-uniform word 
$$w_1(t)=d(B_1)\dots d(B_{t-1})d(B_t \cup \cdots \cup B_{k+l+m} \cup B_1 \cup \cdots \cup B_{t-l-m})d(B_{t-l-m+1})\dots d(B_{k+l+m})$$
contains $xyxy$ as an induced subword.
If the word $d(B_t \cup \cdots \cup B_{k+l+m} \cup B_1 \cup \cdots \cup B_{t-l-m})$ contains both $x$ and $y$, then we have $t-l-m+1 \leq t \le j \le k+l+m$ and $1 \le i \le t-l-m \leq t-1$.
This implies that the $2$-uniform word $w_1(t)$ contains $xyxy$ as an induced subword.
Therefore, for each $t\in[k+l+m]$, $w_1(t)$ contains either $xy$ or $xyxy$ as an induced subword.

For $w_2(t)$, we claim that $w_2(t)$ contains $xyxy$ as an induced subword if $1 \le t \le m-1$, and contains $xy$ as an induced subword if $m \le t \le k+l+m$.
For $1 \le t \le m-1$, it is obvious that the $2$-uniform word
$$w_2(t)=d(B_1)\dots d(B_{k+l+t})u(B_{k+l+t+1}\cup \cdots \cup B_{k+l+m} \cup B_1 \cup \cdots \cup B_t)d(B_{t+1})\dots d(B_{k+l+m})$$
contains $xyxy$ as an induced subword if $x$ and $y$ do not appear in the word $u(B_{k+l+t+1}\cup \cdots \cup B_{k+l+m} \cup B_1 \cup \cdots \cup B_t)$ at the same time.
On the other hand, since we have $j-i =a \le k+l$, we observe that it is impossible to have both $k+l+t+1 \le j \le k+l+m$ and $1 \le i \le t$.
Hence if the word $u(B_{k+l+t+1}\cup \cdots \cup B_{k+l+m} \cup B_1 \cup \cdots \cup B_t)$ contains both $x$ and $y$, then it must be that either $k+l+t+1 \le i<j \le k+l+m$ or $1\le i<j\le t$.
In any case, the $2$-uniform word $w_2(t)$ contains $xyxy$ as an induced subword.
For $m \le t \le k+l+m$, clearly the $1$-uniform word 
$$w_2(t)=d(B_1)\dots d(B_{t-m})u(B_{t-m+1}\cup \cdots \cup B_t)d(B_{t+1})\dots d(B_{k+l+m})$$
contains $xy$ as an induced subword since $x<y$.

\smallskip
\noindent (2) Now we assume that $i>j$.
In this case, $j-i = a-(k+l+m)$ and the word $d(B_1)\dots d(B_{k+l+m})$ contains $yx$ as an induced subword.
We will show that for each $t\in[k+l+m]$, $w_1(t)$ and $w_2(t)$ contain either $yx$ or $yxyx$ as an induced subword.

For $w_1(t)$, from the assumption $x < y$, it is clear that the $1$-uniform word
$$w_1(t)=d(B_1)\dots d(B_{t-1})d(B_t \cup \cdots \cup B_{t+k})d(B_{t+k+1})\dots d(B_{k+l+m})$$
contains $yx$ as an induced subword when $1 \le t \le l+m$.
Hence it remains to show that for $l+m+1 \le t \le k+l+m$, the $2$-uniform word
$$w_1(t)=d(B_1)\dots d(B_{t-1})d(B_t \cup \cdots \cup B_{k+l+m} \cup B_1 \cup \cdots \cup B_{t-l-m})d(B_{t-l-m+1})\dots d(B_{k+l+m})$$
contains $yxyx$ as an induced subword.
%we claim that $w_1(t)$ contains $xy$ as an induced subword if $1 \le t \le l+m$ and contains $xyxy$ as an induced subword if $l+m+1 \le t \le k+l+m$.
%For $1 \le t \le l+m$, since $x<y$, the $1$-uniform word
%$$w_1(t)=d(B_1)\dots d(B_{t-1})d(B_t \cup \cdots \cup B_{t+k})d(B_{t+k+1})\dots d(B_{k+l+m})$$
%contains $yx$ as an induced subword.
This claim is obviously true when $x$ and $y$ do not appear in the word $d(B_t \cup \cdots \cup B_{k+l+m} \cup B_1 \cup \cdots \cup B_{t-l-m})$ at the same time.
If the word $d(B_t \cup \cdots \cup B_{k+l+m} \cup B_1 \cup \cdots \cup B_{t-l-m})$ contains both $x$ and $y$, then it must be that either $t \le j<i \le k+l+m$ or $1 \le j <i \le t-l-m$.
For this, observe that it is impossible to have both $t \le i \le k+l+m$ and $1 \le j \le t-l-m$.
If not, we have $j-i=a-(k+l+m) \le -l-m$, which implies that $a \le k$.
This contradicts to the fact that $a \geq k+1$.
Thus it follows that the $2$-uniform word $w_1(t)$ contains $yxyx$ as an induced subword.

Finally, we claim that $w_2(t)$ contains $yxyx$ as an induced subword if $1 \le t \le m-1$, and contains $yx$ as an induced subword if $m \le t \le k+l+m$.
For $1\le t \le m-1$, as before, the statement is obviously true if $x$ and $y$ do not appear at the same time in the word $u(B_{k+l+t+1}\cup \cdots \cup B_{k+l+m} \cup B_1 \cup \cdots \cup B_t)$. 
Note that from $a \leq k+l$, we have that $i-j=(k+l+m)-a \ge m$.
Thus if the word $u(B_{k+l+t+1}\cup \cdots \cup B_{k+l+m} \cup B_1 \cup \cdots \cup B_t)$ contains both $x$ and $y$, then it must be that $t+1 < k+l+t+1 \le i \le k+l+m$ and $1 \le j \le t < k+l+t$. Thus the $2$-uniform word
$$w_2(t)=d(B_1)\dots d(B_{k+l+t})u(B_{k+l+t+1}\cup \cdots \cup B_{k+l+m} \cup B_1 \cup \cdots \cup B_t)d(B_{t+1})\dots d(B_{k+l+m})$$
contains $yxyx$ as an induced subword.
% one y in d(B_1)\dots d(B_{k+m+t})
% xy in u(B_{k+l+t+1}\cup \cdots \cup B_{k+l+m} \cup B_1 \cup \cdots \cup B_t)
% one x in d(B_{t+1})\dots d(B_{k+l+m})
For $m \le t \le k+l+m$, since $i-j \ge m$, the word $u(B_{t-m+1}\cup \cdots \cup B_t)$ cannot contain both $x$ and $y$, so that the $1$-uniform word
$$w_2(t)=d(B_1)\dots d(B_{t-m})u(B_{t-m+1}\cup \cdots \cup B_t)d(B_{t+1})\dots d(B_{k+l+m})$$
contains $yx$ as an induced subword.

\bigskip
% \item  
\noindent{\bf Case 2.} Let $j-i \equiv a \pmod {k+l+m}$ for some $a \in \{1,\dots,k\}$.
In this case, $x$ and $y$ are not adjacent in $G_n(0^k1^l0^m)$, so $x$ and $y$ must not alternate in $W$. We consider two subcases.

\smallskip
\noindent (1) First assume that $i<j$.
Then $j = i+a$ and the word $d(B_1)\dots d(B_{k+l+m})$ contains $xy$ as an induced subword.
If $i \le l+m$, then we have $j = i+a \leq i+k$, and hence the word $d(B_i \cup \cdots \cup B_{i+k})$ contains both $x$ and $y$.
Thus the $1$-uniform word
$$w_1(i)=d(B_1)\dots d(B_{i-1})d(B_i \cup \cdots \cup B_{i+k})d(B_{i+k+1})\dots d(B_{k+l+m})$$
contains $yx$ as an induced subword, which implies that $x$ and $y$ do not alternate in $W$.
If $1+l+m \le i \le k+l+m$, then since $i < j \le k+l+m$, the word $d(B_i \cup \cdots \cup B_{k+l+m} \cup B_1 \cup \cdots \cup B_{i-l-m})$ contains exactly one $x$ and exactly one $y$.
Thus the $2$-uniform word 
$$w_1(i)=d(B_1)\dots d(B_{i-1})d(B_i \cup \cdots \cup B_{k+l+m} \cup B_1 \cup \cdots \cup B_{i-l-m})d(B_{i-l-m+1})\dots d(B_{k+l+m})$$
contains $yxxy$ as an induced subword, so $x$ and $y$ do not alternate in $W$.

\smallskip
\noindent (2) Next we assume that $i>j$.
Then $j = i + a - (k+l+m)$ and the word $d(B_1)\dots d(B_{k+l+m})$ contains $yx$ as an induced subword.
Since $j=i+a-(k+l+m) \le i-l-m$, we have $i \ge l+m+1$.
Thus the word $d(B_i \cup \cdots \cup B_{k+l+m} \cup B_1 \cup \cdots \cup B_{i-l-m})$ contains exactly one $x$ and exactly one $y$. Therefore the word $$w_1(i)=d(B_1)\dots d(B_{i-1})d(B_i \cup \cdots \cup B_{k+l+m} \cup B_1 \cup \cdots \cup B_{i-l-m})d(B_{i-l-m+1})\dots d(B_{k+l+m})$$ contains $yyxx$ as an induced subword, so $x$ and $y$ do not alternate in $W$.

\bigskip
%%
%\item 
\noindent{\bf Case 3.}
Let $j-i \equiv a \pmod {k+l+m}$ for some $a \in \{k+l+1,\dots,k+l+m\}$.
Again, $x$ and $y$ are not adjacent in $G_n(0^k1^l0^m)$ in this case, and hence $x$ and $y$ must not alternate in $W$.
The argument for this case is similar to that for {\bf Case 2}, but here we will use the word $w_2(i)$ instead of the word $w_1(i)$ in {\bf Case 2}.

\smallskip
\noindent (1) First assume that $i <j$.
Then $j = i+a$ and $d(B_1)\dots d(B_{k+l+m})$ contains $xy$ as an induced subword.
Since $j=i+a \ge i+k+l+1$, we have $i \le m-1$. Thus the word $u(B_{k+l+i+1}\cup \cdots \cup B_{k+l+m} \cup B_1 \cup \cdots \cup B_i)$ contains exactly one $x$ and exactly one $y$.
Thus the $2$-uniform word
$$w_2(i)=d(B_1)\dots d(B_{k+l+i})u(B_{k+l+i+1}\cup \cdots \cup B_{k+l+m} \cup B_1 \cup \cdots \cup B_i)d(B_{i+1})\dots d(B_{k+l+m})$$
contains $xxyy$ as an induced subword, and hence $x$ and $y$ do not alternate in $W$.

\smallskip
\noindent (2) Next we assume $i \ge j$.
Then $j = i+a-(k+l+m)$ and $d(B_1)\dots d(B_{k+l+m})$ contains $yx$ as an induced subword.
If $m \le i \le k+l+m$, then since $j=i+a -(k+l+m)\ge i-m+1$, the word $u(B_{i-m+1}\cup \cdots \cup B_i)$ contains exactly one $x$ and exactly one $y$.
Thus the $1$-uniform word
$$w_2(i)=d(B_1)\dots d(B_{i-m})u(B_{i-m+1}\cup \cdots \cup B_i)d(B_{i+1})\dots d(B_{k+l+m})$$
contains $xy$ as an induced subword.
If $1\le i \le m-1$, then the word $u(B_{k+l+i+1}\cup \cdots \cup B_{k+l+m} \cup B_1 \cup \cdots \cup B_i)$ contains exactly one $x$ and exactly one $y$ since $1 \le j \le i \le m-1$.
Thus the word $$w_2(i)=d(B_1)\dots d(B_{i-m})u(B_{i-m+1}\cup \cdots \cup B_i)d(B_{i+1})\dots d(B_{k+l+m})$$ contains $yxxy$ as an induced subword.
In either case, $x$ and $y$ do not alternate in $W$.
%\end{enumerate}
\end{proof}

\begin{example}\label{ex1} {\em We illustrate the construction of the word $W$ in Theorem~\ref{010} in the case of $k=2$, $l=3$, $m=1$ and $n=11$. In this case, $B_1=\{1,7\}$, $B_2=\{2,8\}$, $B_3=\{3,9\}$, $B_4=\{4,10\}$, $B_5=\{5,11\}$, and $B_6=\{6\}$. 
Thus, $W$ is obtained by concatenating  the following words
$$\begin{array}{l}
d(B_1)d(B_2)\dots d(B_{6})=71~82~93~(10)4~(11)5~6 \\
w_1(1) = d(B_1\cup B_2 \cup B_3) d(B_4) d(B_5) d(B_6) = 987321~(10)4~(11)5~6 \\
w_1(2) = 71~(10)98432~(11)5~6 \\
w_1(3) = 71~82~(11)(10)9543~6 \\
w_1(4) = 71~82~93~(11)(10)654 \\
w_1(5) = 71~82~93~(10)4~(11)7651~82~93~(10)4~(11)5~6\\
w_1(6) = 71~82~93~(10)4~(11)5~87621~93~(10)4~(11)5~6\\
w_2(1) = u(B_1) d(B_2) d(B_3) d(B_4) d(B_5) d(B_6) = 17~82~93~(10)4~(11)5~6\\
w_2(2) = 71~28~93~(10)4~(11)5~6\\
w_2(3) = 71~82~39~(10)4~(11)5~6\\
w_2(4) = 71~82~93~4(10)~(11)5~6\\
w_2(5) = 71~82~93~(10)4~5(11)~6\\
w_2(6) = 71~82~93~(10)4~(11)5~6.\end{array}$$
It can be checked using \cite{Glen} that this $W$ indeed represents $G(0^21^30)$.}
\end{example}

Observe that in Theorem~\ref{010}, we do not have to consider {\bf Case 2} if $k=0$, and we do not have to consider {\bf Case 3} if $m=0$.
This allows us to provide shorter words that represent the graphs $G_n(0^k 1^l)$ and $G_n(1^l 0^m)$, respectively.
They will be described in the following corollary.
\begin{cor}\label{01,10}
For any positive integers $k,l,m$ and $n$, the graphs $G_n(0^{k}1^{l})$ and $G_n(1^{l}0^{m})$ are word-representable.
\end{cor}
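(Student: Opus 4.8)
The plan is to obtain the two families $G_n(0^k 1^l)$ and $G_n(1^l 0^m)$ as the degenerate cases $m=0$ and $k=0$ of Theorem~\ref{010}, but with the word $W$ streamlined accordingly. The key observation — already flagged in the remark preceding the corollary — is that in the proof of Theorem~\ref{010} the words $w_2(t)$ were introduced solely to kill alternation in \textbf{Case 3} (the residues $a\in\{k+l+1,\dots,k+l+m\}$ of a non-edge), and \textbf{Case 2} (the residues $a\in\{1,\dots,k\}$) was handled entirely by the words $w_1(t)$. So when $m=0$ there is no \textbf{Case 3}, hence no need for any $w_2(t)$, and when $k=0$ there is no \textbf{Case 2}, hence the words $w_1(t)$ need only handle the edge case (\textbf{Case 1}) and can be taken to be mere permutations.

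Concretely, for $G_n(1^l 0^m)$ (the case $k=0$), I would set $B_i=\{a\in[n] : a\equiv i \pmod{l+m}\}$ and define the word
$$W := d(B_1)d(B_2)\cdots d(B_{l+m})\, w_2(1)\, w_2(2)\cdots w_2(l+m),$$
where $w_2(t)$ is exactly the word from Theorem~\ref{010} specialized to $k=0$. For $G_n(0^k 1^l)$ (the case $m=0$), I would set $B_i=\{a\in[n] : a\equiv i\pmod{k+l}\}$ and define
$$W := d(B_1)d(B_2)\cdots d(B_{k+l})\, w_1(1)\, w_1(2)\cdots w_1(k+l),$$
with $w_1(t)$ as in Theorem~\ref{010} specialized to $m=0$ (so all $w_1(t)$ with $1\le t\le l$ are $1$-uniform permutations and those with $l+1\le t\le k+l$ are $2$-uniform). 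In each case the verification is a literal restriction of the corresponding cases in the proof of Theorem~\ref{010}: \textbf{Case 1} shows each relevant factor contains $xy$ or $xyxy$ (resp.\ $yx$ or $yxyx$) for an edge $xy$, while the single remaining non-edge case (\textbf{Case 2} when $m=0$, \textbf{Case 3} when $k=0$) exhibits a factor $yxxy$, $yyxx$, $xxyy$ or $yxxy$ destroying alternation. One should also record the corner cases $l=0$ (empty graph, represented by $1122\cdots nn$) and $k=m=0$ (complete graph, represented by any permutation), exactly as at the start of the proof of Theorem~\ref{010}.

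I do not expect any real obstacle here: the content is entirely contained in Theorem~\ref{010}, and the only work is bookkeeping — checking that dropping the $w_2$ block (resp.\ the $w_1$ block, and downgrading $w_1$ to permutations) does not accidentally reintroduce alternation for a non-edge handled by the deleted block, which cannot happen precisely because the deleted block governed a residue class of pair-differences that does not occur when $m=0$ (resp.\ $k=0$). The mildest subtlety worth stating explicitly is that in the specialization $k=0$ the interval $B_t\cup\cdots\cup B_{t+k}$ collapses to the single set $B_t$, so the would-be $w_1(t)$ are just the base permutation $d(B_1)\cdots d(B_{l+m})$ and contribute nothing — which is why they are omitted from $W$ — and similarly for $m=0$ one checks directly that the resulting $W$ has the same first column (lower-triangular part) as $\mathcal B(g,z)_{n-1}$ with $g=(1+z+\cdots+z^{l-1})/(1-z^{k+l})$ or $(b_1+\cdots+b_l z^{l-1})/(1-z^{l+m})$, $b_i\equiv 1\pmod 2$, as required by the definition of $G_n(0^k1^l)$ and $G_n(1^l0^m)$.
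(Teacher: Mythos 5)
Your proposal is correct and follows essentially the same route as the paper: the paper likewise notes that \textbf{Case 2} (resp.\ \textbf{Case 3}) is vacuous when $k=0$ (resp.\ $m=0$) and represents $G_n(0^k1^l)$ by $d(B_1)\cdots d(B_{k+l})w_1(1)\cdots w_1(k+l)$ and $G_n(1^l0^m)$ by $d(B_1)\cdots d(B_{l+m})w_2(1)\cdots w_2(l+m)$, exactly the words you construct. Since the corollary assumes $k,l,m$ positive, the extra degenerate cases you record are harmless but not needed.
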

\begin{proof}
Let $k,l$ and $n$ be fixed.
Then the graph $G_n(0^k 1^l)$ is precisely the case when $k$ and $l$ are positive and $m=0$ in Theorem~\ref{010}.
By the above observation, {\bf Case 1} and {\bf Case 2} in Theorem~\ref{010} imply that the graph $G_n(0^{k}1^{l})$ can be represented by the word
$$W':=d(B_1)\dots d(B_{k+l})w_1(1)\dots w_1(k+l).$$

Now let $l, m$ and $n$ be fixed.
Then the graph $G_n(1^l 0^m)$ is precisely the case when $l$ and $m$ are positive and $k=0$ in Theorem~\ref{010}.
By the above observation, {\bf Case 1} and {\bf Case 3} in Theorem~\ref{010} imply that the graph $G_n(1^l 0^m)$ can be represented by the word 
$$W'':=d(B_1)\dots d(B_{l+m})w_2(1) \dots w_2(l+m).$$

%Let define $A_i:=\{a \in [n]\ |\ a \equiv i \pmod{k+l}\}$ for each $i\in[k+l]$, and $B_i:=\{a \in [n]\ |\ a \equiv i \pmod{m+l}\}$ for each $i\in[l+m]$.
%
%$G_n(0^{k}1^{l})$ is equal to the graph $G_n(0^{k}1^{l}1^{m})$ when $m=0$. 
%Thus $G_n(0^{k}1^{l})$ can be represented by the word 
%$$d(A_1)d(A_2)\dots d(A_{k+l})w_1(1)\dots w_1(k+l)$$
%such that 
%$$w_1(t)=d(A_1)\dots d(A_{t-1})d(A_t \cup \cdots \cup A_{t+k})d(A_{t+k+1})\dots d(A_{k+l})$$
%for $1 \le t \le l$, and
%$$w_1(t)=d(A_1)\dots d(A_{t-1})d(A_t \cup \cdots \cup A_{k+l} \cup A_1 \cup \cdots \cup A_{t-l})d(A_{t-l+1})\dots d(A_{k+l})$$
%for $l+1 \le t \le k+l$.
%
%$G_n(1^{l}0^{m})$ is equal to the graph $G_n(0^{k}1^{l}1^{m})$ when $k=0$. 
%Thus $G_n(1^{l}0^{m})$ can be represented by the word 
%$$d(B_1)d(B_2)\dots d(B_{l+m})w_1(1)\dots w_1(l+m)w_2(1)\dots w_2(l+m)$$
%such that 
%$$w_1(t)=d(B_1)\dots d(B_{t-1})d(B_t)d(B_{t+1})\dots d(B_{l+m})$$
%for $1 \le t \le l+m$, and
%$$w_2(t)=d(B_1)\dots d(B_{l+t})u(B_{l+t+1}\cup \cdots \cup B_{l+m} \cup B_1 \cup \cdots \cup B_t)d(B_{t+1})\dots d(B_{l+m})$$
%for $1 \le t \le m-1$, and
%$$w_2(t)=d(B_1)\dots d(B_{t-m})u(B_{t-m+1}\cup \cdots \cup B_t)d(B_{t+1})\dots d(B_{l+m})$$
%for $m \le t \le l+m$.
%Actually, $w_1(t)=d(B_1)\dots d(B_{l+m})$ is a permutation on $[l+m]$ and is just repeated $(l+m+1)$ time in the beginning of $w(G_n)$. Thus $G_n(1^{l}0^{m})$ can be represented by the word 
%$$d(B_1)d(B_2)\dots d(B_{l+m})w_2(1)\dots w_2(l+m).$$
\end{proof}

\subsection{Word-representability of $G_n(1^{k-1}01^{m-k})$}
Here we deal with word-representability of $G_n(1^{k-1}01^{m-k})$ under given assumptions.
The condition that either $\gcd(k,m)=1$ or $k=\frac{m}{2}$ cannot be removed because of the existence of non-word-representable graphs examined by \cite{Glen}.
When we have $m=6$ and $k=2$, the graph $G_9(101111)$ is not word-representable.
Note that in this case we have ${\gcd}(k,m)=2$.
Moreover, the assertion of Theorem~\ref{thm:101} is not naturally extended to arbitrary graphs of type $G_n(1^k 0^{m-k-l} 1^l)$.
For instance, $G_{12}(1110011)$ is not word-representable.

\begin{theorem}\label{thm:101}
If positive integers $k,m$ such that $k < m$ satisfy ${\rm gcd}(k,m)=1$ or $k=\frac{m}{2}$ (when $m$ is even), $G_n(1^{k-1}01^{m-k})$ is word-representable for any $n$.
\end{theorem}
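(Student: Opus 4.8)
The plan is to mimic the successful strategy of Theorem~\ref{010}: partition $[n]$ into residue classes modulo $m$, say $B_1,\dots,B_m$ with $B_i=\{a\in[n]\ |\ a\equiv i\pmod m\}$, and build a word $W$ by concatenating a batch of carefully chosen $1$- and $2$-uniform ``shuffles'' of the blocks $d(B_1),\dots,d(B_m)$ (or $u(B_i)$ in places). The key structural fact is that in $G_n(1^{k-1}01^{m-k})$ two vertices $x\in B_i$, $y\in B_j$ with $x<y$ are adjacent precisely when $j-i\not\equiv 0\pmod m$ \emph{and} $j-i\not\equiv k\pmod m$; the non-edges come in exactly two ``bad'' congruence classes, $0$ and $k$, rather than one, which is the essential new feature over the $0^k1^l0^m$ case. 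For the class $0$ (vertices in the same block) alternation is automatic from the $1$-uniform starting word $d(B_1)\cdots d(B_m)$: two vertices in the same $B_i$ can never alternate once we also append any word containing $u(B_i)$, so a single auxiliary factor with the blocks reversed kills all same-block pairs at once — or more simply, same-block non-adjacency is handled exactly as in the proof of Corollary~\ref{001}. The real work is to engineer factors that break alternation for every pair with $j-i\equiv k\pmod m$ while preserving alternation for every pair in one of the $m-2$ ``good'' classes $\{1,\dots,m-1\}\setminus\{k\}$.

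The natural device is, for each $t\in[m]$, a ``window'' factor in which the blocks $B_t,B_{t+1},\dots,B_{t+k}$ (indices mod $m$) are merged into a single decreasing run $d(B_t\cup\cdots\cup B_{t+k})$ with the remaining blocks left in their usual decreasing order, exactly as $w_1(t)$ in Theorem~\ref{010}; a pair $x\in B_i$, $y\in B_j$ with $j-i\equiv k\pmod m$ lands inside the merged window for exactly the value(s) of $t$ with $t\le i$ and $j\le t+k$, and there the $<$-order inside $d(B_t\cup\cdots\cup B_{t+k})$ is the \emph{opposite} of the block order, producing a $yx$ or $yxxy$ factor that destroys alternation, while any good pair (with $0<j-i<k$ they fit inside, with $k<j-i<m$ they never both fit inside a single window of length $k{+}1$) either never both enter a merged run, or enters it for a span of consecutive $t$'s in a way that yields only $xy$ or $xyxy$. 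The subtlety is the wrap-around: when $i$ is large and $j$ is small (so $i>j$ but $j-i\equiv k$), the window must wrap past $B_m$ to $B_1$, and just as in Theorem~\ref{010} one passes to $2$-uniform factors on the wrapping portion so that no good pair is accidentally broken. Here is exactly where the hypothesis ``$\gcd(k,m)=1$ or $k=m/2$'' must enter: with $\gcd(k,m)=1$ the single shift $k$ generates all of $\mathbb{Z}_m$, so sliding the window through all $m$ values of $t$ visits the residue pattern cleanly and one never simultaneously needs a run to be increasing for one good pair and decreasing for another; when $k=m/2$ the two bad classes $0$ and $k$ coincide under doubling and the block structure splits into symmetric halves, again avoiding a clash. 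When $\gcd(k,m)=d\notin\{1,m/2\}$ the bad class $k$ together with $0$ generates only the subgroup $d\mathbb{Z}_m$, and then the window construction is forced to break some pair in a class inside that subgroup that is actually an edge — matching the counterexample $G_9(101111)$ noted before the theorem.

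I would organize the proof as: (i) set up the blocks and record the adjacency criterion $x\sim y\iff j-i\not\equiv 0,k\pmod m$; (ii) define $W=d(B_1)\cdots d(B_m)\,v_1\cdots v_m$ (possibly with a second batch for the wrap-around, and, in the $k=m/2$ case, a slightly different family reflecting the two-block symmetry); (iii) verify in ``good'' cases $a\in\{1,\dots,m-1\}\setminus\{k\}$, split by $i<j$ versus $i>j$, that every $v_t$ contains only $xy$/$xyxy$ (resp.\ $yx$/$yxyx$), so alternation is preserved — this is a bookkeeping argument over which $t$ place $x,y$ inside a merged run, essentially identical in spirit to Case~1 of Theorem~\ref{010}; (iv) verify in the ``bad'' case $a=k$ that some specific $v_t$ (the one with $t=i$, or its wrap-around analogue) contains $yx$, $yxxy$, $yyxx$, or $xxyy$, so alternation fails; (v) dispatch $a=0$ as in Corollary~\ref{001}; and (vi) check the small ``shuffle'' examples with \cite{Glen} as done elsewhere in the paper. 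The main obstacle I anticipate is step (iii) in the wrap-around subcase together with isolating precisely why the number-theoretic hypothesis is exactly what makes the window family consistent — i.e.\ proving that under $\gcd(k,m)=1$ (or $k=m/2$) no good pair is ever forced to simultaneously require a block run to be increasing and decreasing. Making that dichotomy airtight, rather than the individual alternation checks, is where the real content lies; the $k=m/2$ branch will likely need its own tailored word (exploiting that the graph then decomposes along the two residue classes mod $m/2$) rather than a specialization of the $\gcd=1$ construction.
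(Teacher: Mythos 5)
Your proposal contains a genuine gap, and it starts with the adjacency criterion. In $G_n(1^{k-1}01^{m-k})$ the letters are $a_1=\dots=a_{k-1}=1$, $a_k=0$, $a_{k+1}=\dots=a_m=1$; two vertices $x<y$ are non-adjacent if and only if $y-x\equiv k \pmod m$. In particular $a_m=1$, so two vertices in the \emph{same} block $B_i$ (distance a multiple of $m$) \emph{are} adjacent. There is exactly one bad congruence class, not two. Your plan to ``kill all same-block pairs at once'' with a factor containing $u(B_i)$, or to dispatch class $0$ as in Corollary~\ref{001}, would therefore destroy alternation for genuine edges and the resulting word would not represent the graph.

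The second, related problem is the window $d(B_t\cup\cdots\cup B_{t+k})$ of $k+1$ consecutive blocks. Such a merged decreasing run reverses the relative order of every pair $x\in B_i$, $y\in B_j$ with $x<y$ and $i<j$ both inside the window, i.e.\ of every pair at block-distance $1,2,\dots,k$ — but here the distances $1,\dots,k-1$ are \emph{edges}, so this factor produces a $yx$ against the $xy$ of the opening word $d(B_1)\cdots d(B_m)$ and breaks alternation for edges. That device is tailored to $G_n(0^k1^l0^m)$, where the bad distances form the contiguous interval $\{1,\dots,k\}$; here the single bad distance $k$ is isolated inside a run of good ones, and no contiguous window can separate it. The paper's actual idea, which your proposal does not reach, is to use $\gcd(k,m)=1$ to reorder the blocks as $B_1,B_{k+1},B_{2k+1},\dots,B_{(m-1)k+1}$ (a complete residue system precisely because of the gcd hypothesis) and then merge only \emph{adjacent pairs} in this new cyclic order, $d(B_{(t-1)k+1}\cup B_{tk+1})$: consecutive blocks in the reordered sequence differ by exactly $k$, so each merge disturbs only the non-edge class and leaves every edge class alternating. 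Your heuristic for ``where the gcd enters'' gestures at the right group-theoretic fact but does not produce this construction, and without it steps (iii) and (iv) of your outline cannot be carried out. Your instinct that $k=m/2$ needs a separate tailored $2$-uniform word pairing $B_i$ with $B_{i+k}$ is correct and matches the paper.
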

\begin{proof}
Let $[n]=B_1\cup\cdots\cup B_{m}$ be a partition of $[n]$ such that for each $i\in[m]$, $B_i:=\{a \in [n]\ |\ a \equiv i \pmod{m}\}$.

%\begin{enumerate}[{\bf \text{Case }1.}]
%\item 
\bigskip
\noindent{\bf Case 1.} We first consider when ${\rm gcd}(k,m)=1$.
Note that $B_1,B_{k+1},B_{2k+1},\dots,B_{(m-1)k+1}$ are all distinct.
If not, then it must be that $B_{ak+1}=B_{bk+1}$ for some $0 \le a <b \le m-1$, implying that $(b-a)k$ can be divided by $m$. Since ${\rm gcd}(k,m)=1$, we can conclude that $b-a$ can be divided by $m$, which is not true because $0<b-a<m$.

Now for each $1 \le t \le m-1$, we define a $1$-uniform word $w_t$ by
$$w_t:=d(B_1)\dots d(B_{(t-2)k+1})d(B_{(t-1)k+1} \cup B_{tk+1})d(B_{(t+1)k+1})\dots d(B_{(m-1)k+1}),$$
and we define a $2$-uniform word $w_m$ by
$$w_m:=d(B_1)\dots d(B_{(m-2)k+1})d(B_{(m-1)k+1}\cup B_1)d(B_{k+1})\dots d(B_{(m-1)k+1}).$$
We claim that the word $W:=w_1 w_2\dots w_m$ represents the graph $G_n(1^{k-1}01^{m-k})$. Suppose $x \in B_{ik+1}$ and $y \in B_{jk+1}$ for some $i,j \in \{0,1,\dots,m-1\}$. Without loss of generality, we may assume that $x < y$.

\noindent (1) Assume that either $j=i+1$ or $i=m-1$ and $j=0$.
Then $x$ and $y$ are not adjacent in $G_n(1^{k-1}01^{m-k})$, thus $x$ and $y$ should not alternate in $W$. 
If $j=i+1$, then the word $w_i$ contains $xy$ as an induced subword, and the word $w_j$ contains $yx$ as an induced subword.
If we have $i=m-1$ and $j=0$, then the word $w_m$ contains $yyxx$ as an induced subword.
In both cases, $x$ and $y$ do not alternate in the word $W$.

\smallskip

\noindent (2) Otherwise, $x$ and $y$ are adjacent in $G_n(1^{k-1}01^{m-k})$, and hence $x$ and $y$ should alternate in $W$.
%We claim that the word $W$ contains $xyxy\dots$ as an induced subword if $i < j$, and $W$ contains $yxyx\dots$ as an induced subword if $i \geq j$.
%is of the form $xyxy\dots$ of even length if $i < j$ and is of the form $yxyx\dots$ of even length if $i \ge j$.
If $i<j$, then for each $1 \le t \le m-1$, the word $w_t$ contains $xy$ as an induced subword since $j \ne i+1$, and the word $w_m$ contains $xyxy$ as an induced subword.
% for each $1 \le t \le m-1$ since $j \ne i+1$, and $w_m(G_n)|_{\{x,y\}}$ is $xyxy$. 
If $i \ge j$, then for each $1 \le t \le m-1$, the word $w_t$ contains $yx$ as an induced subword, and the word $w_m$ contains $yxyx$ as an induced subword since $x$ and $y$ do not appear in the word $d(B_{(m-1)k+1} \cup B_1)$ at the same time.
In both cases, $x$ and $y$ alternate in $W$.

\bigskip
%\item 
\noindent{\bf Case 2.} Now we consider when $m$ is even and $k=\frac{m}{2}$.
In this case, the $2$-uniform word 
$$W:=d(B_1)d(B_{k+1})\dots d(B_k)d(B_{2k})d(B_{k+1})d(B_1)\dots d(B_{2k})d(B_k)$$
represents the graph $G_n(1^{k-1}01^{m-k})$.
To see this, suppose $x \in B_i$ and $y \in B_j$ for some $i,j \in [m]$, and assume that $x < y$.

If $j=i\pm k$, then $x$ and $y$ are not adjacent in $G_n(1^{k-1}01^{m-k})$.
Observe that the word $W$ contains $xyyx$ as an induced subword if $j=i+k$, and that $W$ contains $yxxy$ as an induced subword if $j=i-k$.
Thus $x$ and $y$ do not alternate in $W$.
% and is $yxxy$ if $j=i-k$, $x$ and $y$ should not alternate in $w(G_n)$. 

Otherwise, i.e. if $j\neq i\pm k$, $x$ and $y$ are adjacent in $G_n(1^{k-1}01^{m-k})$.
Clearly, the word $W$ contains $xyxy$ as an induced subword if $i' < j'$ and $W$ contains $yxyx$ as an induced subword if $i' > j'$, where $i',j'\in[k]$ and $i \equiv i'$ $(\text{mod}~k)$ and $j \equiv j'$ $(\text{mod}~k)$.
Thus $x$ and $y$ alternate in $W$.
%\end{enumerate}
%\vspace{-1cm}
\end{proof}

\section{Non-word-representable Toeplitz graphs}\label{sec3}
As is mentioned above, not all Toeplitz graphs are word-representable.  Using \cite{Glen} we see that the smallest non-word-representable Toeplitz graph has nine vertices.
An example of such a graph is  $G_9(10^21^5)$ given by the adjacency matrix 
$$
\left(
\begin{array}{ccccccccc}
0 &1 & 0 & 0 & 1 & 1 & 1 & 1 & 1\\
1 & 0& 1 & 0 & 0 & 1 & 1 &1& 1\\
 0 & 1 & 0 & 1 & 0 & 0 & 1 & 1 & 1 \\
 0 & 0 & 1 & 0 & 1 & 0 & 0 & 1 & 1\\
 1 & 0 & 0 & 1& 0& 1 & 0 & 0 & 1 \\
 1 & 1 & 0 & 0 & 1 & 0 & 1 & 0 & 0\\
 1 & 1 & 1 & 0 & 0 & 1 & 0 & 1 & 0\\
 1 & 1 & 1 & 1 & 0 & 0 & 1 & 0& 1\\
 1 & 1 & 1 & 1 & 1 & 0 & 0 &1 & 0
\end{array}
\right).
$$

%\begin{prop}\label{two-zeros}
%Let $G$ be Riordan graph defined by $\left(\frac{1}{1-z}-z-z^2,z\right)$. Then IWR$(G)=9$. \end{prop}
%
%\begin{proof} By \cite{Glen}, the smallest truncation of $G$ resulting in non-word-representable graph is the following $9\times 9$ matrix:
%$$
%\left(
%\begin{array}{ccccccccc}
%0 &&&&&&&& \\
%1 & 0&&&&&&& \\
% 0 & 1 & 0 &&&&&& \\
% 0 & 0 & 1 & 0 &&&&& \\
% 1 & 0 & 0 & 1& 0&&&& \\
% 1 & 1 & 0 & 0 & 1 & 0 &&& \\
% 1 & 1 & 1 & 0 & 0 & 1 & 0 && \\
% 1 & 1 & 1 & 1 & 0 & 0 & 1 & 0& \\
% 1 & 1 & 1 & 1 & 1 & 0 & 0 &1 & 0
%\end{array}
%\right)
%$$
%\end{proof}

In this section, we give a necessary condition on the word-representability of a Toeplitz graph.
To do this, we first prove that the induced subgraph of a Toeplitz graph $G_n(a_1\cdots a_m)$ on the vertex subset $\{di:1\leq i\leq \lfloor\frac{n}{d}\rfloor\}$ is also a Toeplitz graph.
The specific case when $d = 2$ implies Theorem~3.12 (iii) in \cite{CJKM}.
\begin{lemma}\label{lem:decomp}
Let $A$ be an $n \times n$ Riordan matrix defined by $\left(f_1+f_2z+\cdots+f_{n-1}z^{n-2},z\right)$. 
For a positive integer $d$, let $k=\lfloor \frac{n}{d}\rfloor$ and $A_d$ be the submatrix of $A$ induced by columns $d,2d,\dots, kd$ in $A$. 
Then the matrix $A_d$ is a $k \times k$ Riordan matrix defined by $\left(f_d+f_{2d}z+\cdots+f_{(k-1)d}z^{k-2},z\right)$.
\end{lemma}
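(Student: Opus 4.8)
The plan is to work directly with the defining relation of a Riordan matrix of Appell type, namely that the $(i,j)$ entry of $A=\left(f_1+f_2z+\cdots+f_{n-1}z^{n-2},z\right)$ is $[z^{i-1}]\left(z^{j-1}(f_1+f_2z+\cdots)\right)$, so that $A$ is the (lower-triangular) Toeplitz matrix whose entry in row $i$, column $j$ equals $f_{i-j+1}$ when $i\ge j$ and $0$ otherwise (with the convention $f_0=0$ if it arises, though here $i>j$ on the strict lower part). The key observation is that this entry depends only on the difference $i-j$. First I would fix notation: index the rows and columns of $A$ by $1,\dots,n$, and let $A_d$ be the $k\times k$ matrix ($k=\lfloor n/d\rfloor$) with rows and columns indexed by $1,\dots,k$, where the $(p,q)$ entry of $A_d$ is the $(pd,qd)$ entry of $A$. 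By the Toeplitz property just noted, this entry equals $f_{pd-qd+1}=f_{(p-q)d+1}$.

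Next I would exhibit the claimed generating-function description. Set $g:=f_d+f_{2d}z+\cdots+f_{(k-1)d}z^{k-2}$, i.e.\ the coefficient of $z^{r-1}$ in $g$ is $f_{rd}$ for $1\le r\le k-1$. Then the Riordan matrix $(g,z)$ has $(p,q)$ entry equal to $[z^{p-1}]\left(z^{q-1}g\right)=[z^{p-q}]g$, which is $f_{(p-q)d}$ when $p-q\ge 1$ (and $0$ when $p\le q$). Comparing with the previous paragraph, I need the entry of $A_d$ strictly below the diagonal, namely $f_{(p-q)d+1}$, to match $f_{(p-q)d}$ — wait, this shows the indexing convention must be handled carefully: the Riordan matrix $(h,z)$ with $h=\sum_{r\ge 0}h_r z^r$ has strictly-lower entry in position $(p,q)$, $p>q$, equal to $h_{p-q-1}$. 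Applying this to $A=(f_1+f_2z+\cdots,z)$ gives entry $f_{p-q}$ in position $(p,q)$, consistent with the Toeplitz description $f_{i-j}$ I should have used. Then the $(p,q)$ entry of $A_d$ is $f_{(p-q)d}$, and $(g,z)$ with $g=f_d+f_{2d}z+\cdots+f_{(k-1)d}z^{k-2}=\sum_{r\ge 0}f_{(r+1)d}z^{r}$ has $(p,q)$ entry $f_{((p-q-1)+1)d}=f_{(p-q)d}$, as desired. So the bulk of the proof is just this bookkeeping: read off the general entry on each side and check they agree, including that both matrices are $k\times k$ and that the off-support entries (on or above the diagonal) are zero on both sides.

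The only genuine subtlety — and the step I would be most careful about — is the indexing of the polynomial in the generating function versus the indexing of the matrix entries: the shift by one between "coefficient of $z^{r-1}$" and "subdiagonal $r$" must be tracked consistently, and one must confirm that $g$ as defined has exactly the right degree so that $A_d$ has no spurious nonzero entries and no missing ones within the $k\times k$ window (this is where $k=\lfloor n/d\rfloor$ and the degree $k-2$ of $g$ enter: the largest difference $p-q$ is $k-1$, requiring the coefficient $f_{(k-1)d}$, which is $f_{n-1}$ or smaller and hence available from $A$'s defining polynomial of degree $n-2$). Once the indexing is pinned down, everything else is a one-line comparison, so I would present the argument as: (i) describe $A$ as a Toeplitz matrix via its defining relation; (ii) describe $(g,z)$ the same way; (iii) observe the $(pd,qd)$ entry of $A$ equals the $(p,q)$ entry of $(g,z)$; (iv) conclude.
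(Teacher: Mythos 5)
Your proposal is correct and follows essentially the same route as the paper: both arguments simply observe that the $(pd,qd)$ entry of $A$ depends only on the difference of indices and equals $f_{(p-q)d}$, which is exactly the general entry of the Riordan matrix $\left(f_d+f_{2d}z+\cdots+f_{(k-1)d}z^{k-2},z\right)$. The paper's proof is a two-line version of this same bookkeeping, and your final resolution of the indexing (entry $f_{i-j}$ in position $(i,j)$, coming from the $\mathcal{B}(zg,z)$ convention used for adjacency matrices) is the one consistent with how the lemma is applied in Theorem~\ref{thm:decomp}.
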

\begin{proof}
For each $i,j \in [k]$ such that $i<j$, the $(i,j)$ entry of $A_d$ is equal to the $(id,jd)$ entry of $A$, which is equal to $f_{(j-i)d}$. 
Thus $A_d$ is the $k \times k$ Riordan matrix which is defined by 
$\left(f_d+f_{2d}z+\cdots+f_{(k-1)d}z^{k-2},z\right)$.
\end{proof}
% Using Lemma~\ref{lem:decomp}, we prove the following.
\begin{theorem}\label{thm:decomp}
Let $G_n(a_1a_2\cdots a_m)$ be a word-representable Toeplitz graph. Then, for each positive divisor $d$ of $m$, $G_{\lfloor \frac{n}{d}\rfloor}(a_d a_{2d} \cdots a_m)$ is word-representable.
\end{theorem}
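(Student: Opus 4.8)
The plan is to combine Lemma~\ref{lem:decomp} with the hereditary nature of word-representability. First I would recall the standard fact (see \cite{KL}) that every induced subgraph of a word-representable graph is word-representable: if $w$ represents $G$ on vertex set $V$ and $U\subseteq V$, then deleting from $w$ all letters not in $U$ yields a word $w|_U$ over $U$, and for $x,y\in U$ the letters $x$ and $y$ alternate in $w|_U$ precisely when they alternate in $w$, i.e.\ precisely when $xy\in E(G)$; hence $w|_U$ represents $G[U]$. So it suffices to identify $G_{\lfloor n/d\rfloor}(a_d a_{2d}\cdots a_m)$ with an induced subgraph of $G_n(a_1 a_2\cdots a_m)$.

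Next I would set up the matrix bookkeeping. Recall that by definition $G_n(a_1\cdots a_m)=G_n\left(\tfrac{b_1+b_2z+\cdots+b_mz^{m-1}}{1-z^m},z\right)$ with $a_i\equiv b_i\pmod 2$, and its adjacency matrix has, below the diagonal, the binary Riordan matrix ${\cal B}(g,z)_{n-1}$ where $g=\tfrac{b_1+\cdots+b_mz^{m-1}}{1-z^m}$. Writing $g=\sum_{t\ge 0}c_t z^t$, one has $c_t\equiv a_{t \bmod m}\pmod 2$ (reading the index cyclically in $[m]$), so the $(i,j)$ entry of ${\cal A}(G_n(a_1\cdots a_m))$ for $i<j$ is $a_{(j-i)\bmod m}$, matching the combinatorial description of a Toeplitz graph. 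Now let $d\mid m$ and consider the vertex subset $U=\{d,2d,\dots,kd\}$ with $k=\lfloor n/d\rfloor$. The induced subgraph on $U$ has adjacency matrix whose below-diagonal part is exactly the submatrix of ${\cal B}(g,z)_{n-1}$ picked out by rows and columns indexed $d,2d,\dots$; by Lemma~\ref{lem:decomp} (applied with $f_t\equiv a_{t\bmod m}$, noting $f_{(j-i)d}\equiv a_{(j-i)d \bmod m}$, which makes sense because $d\mid m$ and so $(j-i)d \bmod m$ runs through the arithmetic progression with common difference $d$) this submatrix is the binary Riordan matrix defining $G_k\left(\tfrac{b_d+b_{2d}z+\cdots+b_m z^{(m/d)-1}}{1-z^{m/d}},z\right)$, i.e.\ the Toeplitz graph $G_k(a_d a_{2d}\cdots a_m)$. (One should double-check the modular reductions are consistent, since $G_n(a_1\cdots a_m)$ is defined through the $b_i$, not the $a_i$, but reduction mod~$2$ commutes with taking the relevant coefficients.)

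Finally I would assemble: $G_n(a_1\cdots a_m)$ is word-representable by hypothesis, hence its induced subgraph on $U$ is word-representable, and that subgraph is $G_{\lfloor n/d\rfloor}(a_d a_{2d}\cdots a_m)$, which completes the proof. The main obstacle, I expect, is purely notational bookkeeping rather than any conceptual difficulty: one must carefully reconcile the indexing in the definition of $G_n(a_1\cdots a_m)$ (which uses the rational generating function $\tfrac{b_1+\cdots+b_mz^{m-1}}{1-z^m}$ and the convention $a_i\equiv b_i\pmod 2$) with the hypotheses of Lemma~\ref{lem:decomp} (which is phrased for a Riordan matrix $(f_1+f_2z+\cdots+f_{n-1}z^{n-2},z)$ with no periodicity assumed), and verify that the $(k-1)$-term coefficient sequence $f_d,f_{2d},\dots,f_{(k-1)d}$ indeed has the eventual period $m/d$ corresponding to the word $a_d a_{2d}\cdots a_m$. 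Once the identification of vertex labels $di\leftrightarrow i$ and the coefficient reductions are pinned down, the rest is immediate.
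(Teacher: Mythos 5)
Your proposal is correct and follows essentially the same route as the paper: apply Lemma~\ref{lem:decomp} to the columns indexed by $d,2d,\dots,\lfloor n/d\rfloor d$, use $d\mid m$ to see that the resulting coefficient sequence is the periodic word $a_d a_{2d}\cdots a_m$, and conclude by the heredity of word-representability under taking induced subgraphs. Your extra care about reconciling the $a_i$ with the $b_i$ modulo $2$ is a point the paper glosses over, but it does not change the argument.
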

\begin{proof}
Let $A$ be the adjacency matrix of $G_n(a_1a_2\cdots a_m)$. Then $A$ is the $n \times n$ Riordan matrix defined by 
$$\left( \frac{a_1+a_2 z+\cdots+a_m z^{m-1}}{1-z^m},z \right).$$
In other words, $A$ is the $n \times n$ Riordan matrix defined by 
$\left(f_1+f_2z+\cdots+f_{n-1}z^{n-2},z\right)$ where $f_i$ is given by $f_i = a_{i-\lfloor\frac{i-1}{m}\rfloor m}$ for each $i\in[n-1]$.
Then, by Lemma~\ref{lem:decomp} we obtain that the submatrix $A_d$ of $A$ is the Riordan matrix defined by 
$$\left(f_d+f_{2d}z+\cdots+f_{(k-1)d}z^{k-2},z\right)$$ for $k=\lfloor \frac{n}{d}\rfloor$. 
% Since $d$ is a divisor of $m$, $m=rd$ for some integer $r \ge 1$. 
Thus $A_d$ is the adjacency matrix of the Toeplitz graph with $\lfloor \frac{n}{d}\rfloor$ vertices defined by
$$\left( \frac{a_d+a_{2d} z+\cdots+a_{m} z^{m-1}}{1-z^{m}},z \right).$$
Therefore, $G_{\lfloor \frac{n}{d}\rfloor}(a_d a_{2d} \cdots a_m)$ is an induced subgraph of $G_n(a_1a_2\cdots a_m)$, which implies that $G_{\lfloor \frac{n}{d}\rfloor}(a_d a_{2d} \cdots a_m)$ is also word-representable by the heredity of the word-representablity.
\end{proof}

Theorem~\ref{thm:decomp} says that if a Toeplitz graph $G_{\lfloor \frac{n}{d} \rfloor}(a_d a_{2d}\cdots a_m)$ is not word-representable for some divisor $d$ of $m$ then the graph $G_n(a_1\cdots a_m)$ is not word-representable. This gives a way to construct non-word-representable Toeplitz graphs. For example, $G_{18}(010001010101)$ is not word-representable because $G_9(101111)$ is not word-representable. More generally, $G_{9k}(w_1 1 w_2 0 w_3 1 w_4 1 w_5 1 w_6 1 w_7 1 w_8 0 w_9 1)$ is not word-representable where $w_i$'s are words over $\{0,1\}$ of length $k-1$ since $G_9(101111)=G_9(101111101)$, so that
Theorem~\ref{thm:decomp} guarantees that among $2^{9k}$ Toeplitz graphs on $9k$ vertices, there are at least $2^{9(k-1)}$  non-word-representable graphs.

\section{Concluding remarks}\label{sec4}

In this paper we give several general classes of word-representable Toeplitz graphs, using explicit representation as a key approach. We were not able to apply similar approach to the other classes of Toeplitz graphs. In any case, further advances in the area, hopefully leading to a complete classification of word-representable Toeplitz graphs, or more generally Riordan graphs, may require usage of other tools, such as the powerful notion of a {\em semi-transitive orientation}. This notion has been used successfully in many situations (see \cite{K} for an overview), because it allows to bypass dealing with complicated constructions on words, and we complete this paper with describing it.   

The notion of a semi-transitive orientation was introduced in \cite{HKP}, but we follow \cite[Section 4.1]{KL} to introduce it here.
A graph $G=(V,E)$ is {\em semi-transitive} if it admits
an {\em acyclic} orientation such that for any directed path 
$v_1\rightarrow v_2\rightarrow \cdots \rightarrow v_k$ with $v_i\in V$ for all $i$, $1\leq i\leq k$, either
\begin{itemize}
\item there is no edge $v_1\rightarrow v_k$, or 
\item the edge $v_1\rightarrow v_k$ is present and there are edges $v_i\rightarrow v_j$ for all $1\le i<j\le k$. 
In other words, in this case, the (acyclic) subgraph induced by the vertices $v_1,\ldots,v_k$ is transitive (with the unique source $v_1$ and the unique sink $v_k$).  
\end{itemize}
We call such an orientation {\em semi-transitive}. In fact, the notion of a semi-transitive orientation is defined in \cite{HKP} in terms of {\em shortcuts} as follows. A {\em semi-cycle} is the directed acyclic
graph obtained by reversing the direction of one edge of a directed cycle in which the directions form a directed path. An acyclic digraph is a shortcut if it is induced by
the vertices of a semi-cycle and contains a pair of non-adjacent
vertices. Thus, a digraph on the vertex set $\{ v_1, \ldots,
v_k\}$ is a shortcut if it contains a directed path $v_1\rightarrow v_2\rightarrow \cdots
\rightarrow v_k$, the edge $v_1\rightarrow v_k$, and it is missing an edge $v_i\rightarrow v_j$ for some $1 \le i
< j \le k$; in particular, we must have $k\geq 4$, so that any shortcut is on at least four vertices. Clearly, this definition is just another way to introduce the notion of a semi-transitive orientation presented above. 

It is not difficult to see that
all transitive (that is, comparability) graphs are semi-transitive,
and thus semi-transitive orientations are a generalization of transitive orientations.  A key theorem in the theory of word-representable graphs is presented next, and we expect it to be of great use in the study of word-representable Riordan graphs.

\begin{theorem}[\cite{HKP}]\label{key-thm} A graph $G$ is word-representable if and only if it admits a semi-transitive orientation (that is, if and only if $G$ is semi-transitive).
\end{theorem}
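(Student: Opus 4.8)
The plan is to prove Theorem~\ref{key-thm} by establishing the two implications separately: if $G$ is word-representable then it is semi-transitive, and conversely. Throughout I would work with a word-representant $w$ of $G=(V,E)$ and, after possibly inserting extra copies of letters, assume $w$ is $k$-uniform for some $k$ (this is harmless: if $x$ and $y$ alternate in $w$ one can always pad to make every letter occur the same number of times while preserving all alternation relations). The key bridge between words and orientations is the following: given a uniform word-representant $w$, orient each edge $xy\in E$ by $x\to y$ if the first occurrence of $x$ precedes the first occurrence of $y$ in $w$. I would show this orientation is semi-transitive, and conversely that a semi-transitive orientation can be turned into a word-representant.

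\textbf{Step 1: word-representable $\Rightarrow$ acyclic orientation with no shortcut.}
First I would verify the orientation defined above is acyclic: a directed cycle $v_1\to v_2\to\cdots\to v_t\to v_1$ would force the first occurrences to satisfy a cyclic chain of strict inequalities, which is impossible. Next, suppose for contradiction that the orientation contains a shortcut on vertices $v_1\to v_2\to\cdots\to v_k$ with the edge $v_1\to v_k$ present but some edge $v_i\to v_j$ ($i<j$) missing. The heart of the argument is a lemma about uniform words: if $x\to y$ and $y\to z$ are edges (so $x,y$ alternate as $xyxy\cdots$ and $y,z$ alternate as $yzyz\cdots$), and $x,z$ are \emph{not} adjacent, then in the restriction of $w$ to $\{x,y,z\}$ one is forced into a configuration showing $x$ and $z$ cannot be adjacent while still being "between" — more precisely, tracking the relative positions of occurrences along the directed path $v_1\to\cdots\to v_k$ and using that consecutive pairs alternate, one shows the non-edge $v_iv_j$ cannot occur when $v_1v_k$ is an edge. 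This is essentially the argument in \cite{HKP}; the main obstacle is bookkeeping the interleaving of occurrences of three or more letters carefully enough to derive the contradiction, and the uniformity assumption is what makes this tractable.

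\textbf{Step 2: semi-transitive $\Rightarrow$ word-representable.}
For the converse, suppose $G$ admits a semi-transitive orientation. I would induct on $|V|$. Pick a vertex $v$ that is a source in the acyclic orientation (one exists since the orientation is acyclic). By induction the graph $G-v$ has a uniform word-representant $w'$; the task is to insert copies of $v$ into $w'$ so that $v$ alternates with exactly its out-neighbours. The semi-transitivity condition on paths out of $v$ is precisely what guarantees the out-neighbourhood of $v$, with the induced orientation, is "transitive enough" that such an insertion is possible — concretely, one inserts a copy of $v$ immediately before the first occurrence of each out-neighbour in a suitable order, and the shortcut-free condition ensures no spurious alternation with non-neighbours is created. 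The delicate point here is handling vertices $x$ with $v\to x$ whose first occurrence in $w'$ comes "too late" relative to other out-neighbours; the semi-transitive property resolves exactly these conflicts.

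\textbf{Main obstacle.}
I expect the crux to be Step~1's interleaving lemma: translating the purely combinatorial alternation condition on a word into the structural statement that the associated orientation has no shortcut. Handling the general directed path $v_1\to\cdots\to v_k$ (rather than just $k=3$) requires care, since one must rule out the missing edge $v_i\to v_j$ for \emph{every} pair $i<j$ simultaneously, and the positions of occurrences of the intermediate vertices constrain each other in ways that are easy to get wrong. Once that lemma is in place, both directions follow the outline above; the construction in Step~2, while technical, is a fairly standard "insert a source vertex" induction.
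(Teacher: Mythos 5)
First, a point of order: the paper does not prove this statement --- Theorem~\ref{key-thm} is quoted from \cite{HKP} --- so there is no in-paper argument to compare yours against, and I am judging your sketch against the known proof. Your Step~1 is essentially the standard argument and is sound once made precise. The clean formulation: in a $k$-uniform word, an edge $x\to y$ (first $x$ before first $y$) forces, for every $p$, that the $p$-th $x$ precedes the $p$-th $y$ and the $p$-th $y$ precedes the $(p+1)$-st $x$. The relation ``$p$-th occurrence of $x$ precedes $p$-th occurrence of $y$ for all $p$'' is transitive, so along a directed path $v_1\to\cdots\to v_t$ with the edge $v_1\to v_t$ present one gets, for all $i<j$ and all $p$, that the $p$-th $v_i$ precedes the $p$-th $v_j$, which precedes the $p$-th $v_t$, which precedes the $(p+1)$-st $v_1$, which precedes the $(p+1)$-st $v_i$; hence $v_i$ and $v_j$ alternate with $v_i\to v_j$, and no shortcut exists. (You do rely on the nontrivial Kitaev--Pyatkin lemma that every word-representable graph admits a uniform representant; ``padding'' is not a proof of that lemma, though it is a citable fact.)

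The genuine gap is Step~2. The concrete mechanism you propose --- insert one copy of $v$ immediately before the first occurrence of each out-neighbour in $w'$ --- cannot work when $w'$ is $k$-uniform with $k\ge 2$: every inserted copy of $v$ then precedes the first occurrence of the out-neighbour $b$ whose first occurrence comes last, so the restriction of the new word to $\{v,b\}$ has the form $v^{d}b^{k}$, which does not alternate. More fundamentally, this reverse implication is the hard half of the theorem, and it does not follow from a local ``insert a source vertex'' induction applied to an arbitrary uniform representant of $G-v$: one must make $v$ alternate with all $k$ occurrences of each neighbour while breaking alternation with every non-neighbour, and an arbitrary word for $G-v$ need not admit any such insertion. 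The proof in \cite{HKP} instead proceeds by induction on $n-\kappa(G)$ (with $\kappa$ the clique number), removes a vertex \emph{outside a maximum clique}, and builds a representant of roughly twice the length by concatenating two suitably modified copies of the word for $G-v$, with the copies of $v$ placed according to a structural decomposition of the in- and out-neighbourhoods of $v$ that semi-transitivity guarantees. None of that machinery appears in your sketch; the sentence ``the semi-transitive property resolves exactly these conflicts'' is precisely where the content of the theorem lives, so as it stands Step~2 is not a proof.
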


\section*{Acknowledgments} This work was supported by the National Research Foundation of Korea (NRF) grant funded by the Korea government (MSIP) (2016R1A5A1008055) and the Ministry of Education of Korea (NRF-2016R1A6A3A11930452).

%\bibliography{integer_partition}

%\begin{thebibliography}{99}
%
%\end{thebibliography}

\end{document}